\newtheorem{theorem}{Theorem}[section]
\newtheorem{corollary}[theorem]{Corollary}
\newtheorem{lemma}[theorem]{Lemma}
\newtheorem{proposition}[theorem]{Proposition}
\newtheorem{example}[theorem]{Example}
\newtheorem{conjecture}[theorem]{Conjecture}
\begin{document}

\title{\bf Total eccentricity index of graphs with fixed number of pendant or cut vertices}

\author[1,2]{Dinesh Pandey \thanks{Corresponding Author: dinesh.pandey@niser.ac.in, Supported by UGC Fellowship scheme (Sr. No. 2061641145), Government of India}}
\author[1,2]{Kamal Lochan Patra \thanks{klpatra@niser.ac.in}}
\affil[1]{School of Mathematical Sciences,
National Institute of Science Education and Research (NISER), Bhubaneswar,
P.O.- Jatni, District- Khurda, Odisha - 752050, India 
}

\affil[2]{Homi Bhabha National Institute (HBNI),
Training School Complex, Anushakti Nagar,
Mumbai - 400094, India}
\date{}
\maketitle

\begin{abstract}
The total eccentricity index of a connected graph is defined as sum of the eccentricities of all its vertices. In this paper, we give the sharp upper bound on the total eccentricity index over graphs with fixed number of pendant vertices and the sharp lower bound on the same over graphs with fixed number of cut vertices. We also provide the sharp upper bounds on the total eccentricity index over graphs with $s$ cut vertices for $s=0, 1, n-3, n-2$ and propose a conjecture for $2\leq s\leq n-4.$\\

\noindent {\bf Key words:} Cut vertex; Pendant vertex; Unicyclic graph; Eccentricity;  Total eccentricity index\\

\noindent {\bf AMS subject classification.} 05C05; 05C12; 05C35

\end{abstract}

\section{Introduction}
Throughout this article, graphs are finite, simple, connected and undirected. Let $G=(V,E)$ be a graph with vertex set $V(G)$ and edge set $E(G)$. The number of vertices adjacent to a vertex $v$ is called $\it{degree}$ of $v$ and we  denote it by $deg(v)$. A vertex of degree one in $G$ is called a {\it pendant vertex}. A vertex $w$ is called a {\it cut vertex} of $G$ if $G - w$ is disconnected. For two isomorphic graphs $G$ and $H$, we write $G\cong H$. A graph $G$ with $|V(G)|\geq 2$ is called a {\it $2$-connected} graph if it has no cut vertex.  For $u,v\in V(G)$, the distance $d_G(u,v)$ or $d(u,v)$ is the number of edges in a shortest path joining $u$ and $v$. For two subgraphs $H_1$ and $H_2$ of $G$, the distance $d(H_1,H_2)$ in $G$ is defined as $d(H_1,H_2)=\min\{d(u,v): u\in V(H_1), v\in V(H_2)\}$. The {\it eccentricity} of a vertex $v$, denoted by $e_G(v)$ or $e(v)$, is defined as $e(v)=\max\{d(v,u)|u\in V(G)\}$.  A vertex $u$ is called an {\it eccentric vertex} of $v$ if $e(v)=d(v,u)$. A vertex of minimum eccentricity in $G$ is called a {\it central  vertex} and the set of all central vertices of $G$ is called the {\it center} of $G$, denoted by $C(G)$. The {\it diameter}, $diam(G)$ of $G$ is the maximum eccentricity over all the vertices of $G$. The sum of distances between all unordered pair of vertices in a graph $G$ is called the {\it Wiener index} of $G$ and we denote it by $W(G)$. The {\it total eccentricity index} of  $G$ is defined as the sum of eccentricities of all its vertices and we denote it by $\varepsilon(G)$. The {\it average eccentricity} of $G$,  denoted by $avec(G),$ is defined as $avec(G)=\frac{\varepsilon(G)}{n}$, where $n$ is the order of $G$. \\

The average eccentricity is first defined by Buckley and Harary in \cite{Bh} (see Exercise 9.1, problem 4) by the name {\it eccentric  mean}. Different extremal problems related  to the average eccentricity are studied for various classes of graphs. In \cite{Do}, Dankelmann and Osaye gave an upper bound on average eccentricity of graphs with fixed maximum and minimum degree. In \cite{Tz2}, Tang and Zhou have given lower and upper bounds on the average eccentricity of trees with fixed diameter, fixed number of pendant vertices and fixed matching number. In \cite{I}, Ili\'c found the tree having maximum average eccentricity over trees with fixed maximum degree. In \cite{Dgs} Dankelmann et al. established an upper bound for average eccentricity over graphs with fixed minimum degree. The average eccentricity of unicylic graphs  and unicyclic graphs with fixed girth are studied in \cite{Tz1} and \cite{Yfw} respectively. Dankelmann and Mukwembi \cite{Dm} have given sharp upper bounds on the average eccentricity of graphs of order $n$ in terms of independence number, chromatic number, domination number or connected domination number.

Studying the extremal problems on the total eccentricity index in different classes of graphs is equivalent to the study of the same for the average eccentricity. The total eccentricity index is also studied by the name of eccentricity sum or total eccentricity (see \cite{Ssw} and \cite{Ffy}). Smith et al. \cite{Ssw}, determined the trees which maximise or minimise the total eccentricity index over trees with fixed degree sequence. In \cite{Dnp}, the total eccentricity index of generalised hierarchical product of graphs are studied. 

The Wiener index of different classes of graphs have been studied extensively. It has been observed that the Wiener index and the total eccentricity index have similar behaviour in various classes of graphs. For example, among all trees on $n$ vertices the Wiener index and the total eccentricity index is maximized by the path and minimized by the star. In many class of graphs, the graph which maximises (minimises) the total eccentricity index is same as the graph which maximises (minimises) the Wiener index. Although it is not necessary that for two graphs $G$ and $H$, if $W(G)<W(H)$ then $\varepsilon(G)< \varepsilon(H)$. In the following example, we have two graphs $G_1$ and $G_2$ with $W(G_1)<W(G_2)$ but $\varepsilon(G_1)> \varepsilon(G_2).$
\begin{example}
$W(G_1)=13, W(G_2)=14, \varepsilon(G_1)=10,\varepsilon(G_2)=9$
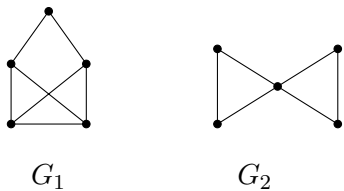
\begin{figure}[h]
\begin{center}
\begin{tikzpicture}[scale=1.0]
\filldraw (0,0) circle [radius=.5 mm];
\filldraw (1,0) circle [radius=.5 mm];
\filldraw (0,.8) circle [radius=.5mm];
\filldraw (1,.8) circle [radius=.5 mm];
\filldraw (0.5,1.5) circle [radius=.5 mm];
\draw (0,0)--(1,0)--(0,.8)--(0,0)--(1,.8)--(.5,1.5)--(0,0.8);
\draw (1,0)--(1,.8);
\draw (0.5,-.7) node {$G_1$};
\end{tikzpicture}
\hskip 1.5cm
\begin{tikzpicture}
\filldraw (0,0) circle [radius=.5 mm];
\filldraw (0,1) circle [radius=.5 mm];
\filldraw (.8,.5) circle [radius=.5 mm];
\filldraw (1.6,0) circle [radius=.5 mm];
\filldraw (1.6,1) circle [radius=.5 mm];
\draw (.8,.5)--(0,0)--(0,1)--(.8,.5)--(1.6,0)--(1.6,1)--(.8,.5);
\draw (0.5,-.7) node {$G_2$};
\end{tikzpicture}
\end{center}
\caption{Two graphs $G_1$ and $G_2$ with $W(G_1)<W(G_2)$ but $\varepsilon(G_1)>\varepsilon(G_2)$}
\end{figure}

\end{example}

In \cite{Dakd}, the authors have studied some relations between  total eccentricity index and Wiener index in graphs. In particular, they have  given some bounds on Wiener index in terms of total eccentricity index and also studied the difference $W(G)-\varepsilon(G)$ for some classes of graphs. In \cite{Pp}, Pandey and Patra established the sharp lower and upper bounds on the Wiener index over graphs with fixed number of pendant vertices and characterized  the graph which minimizes the Wiener index over graphs with fixed number of cut vertices. In this paper, we study the total eccentricity index over  graphs with fixed number of pendant vertices and graphs with fixed number of cut vertices. More specifically, We give the

\begin{itemize}
\item[i)]  sharp upper bounds on the total eccentricity index among all graphs on $n$ vertices with $k$ pendant vertices.
\item[ii)]  sharp lower bounds on the total eccentricity index among all graphs on $n$ vertices with $s$ cut vertices.
\item[iii)]  sharp upper bounds on the total eccentricity index among all graphs on $n$ vertices with $s$ cut vertices for $s=0,1,n-3$ and $n-2.$
 \end{itemize}
Some of these above bounds are achieved by a unique graph. In all the cases, we give a graph which attains the bound. We also propose a conjecture on the graph which attains the maximum total eccentricity index over graphs on $n$ vertices with  $2\leq s \leq n-4$ cut vertices.

\section{Preliminaries}

The following lemma  is very useful.
\begin{lemma}\label{edge}
Let $G$ be a graph and let $u,v\in V(G)$ be  two non-adjacent vertices. Let $G'$ be the graph obtained from $G$ by joining $u$ and $v$ with an edge. Then $\varepsilon(G)\geq \varepsilon(G').$ 
\end{lemma}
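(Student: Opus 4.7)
The plan is to exploit the elementary monotonicity of shortest-path distances under edge addition, then lift that pointwise inequality to the eccentricity and finally sum over all vertices.

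First I would observe that $V(G')=V(G)$ and $E(G)\subseteq E(G')$, so every walk in $G$ between two vertices $x,y$ is also a walk in $G'$. A shortest $xy$-path in $G$ is therefore an $xy$-path in $G'$, which yields $d_{G'}(x,y)\le d_G(x,y)$ for every pair $x,y\in V(G)$. (Connectedness of $G'$ is automatic since $G$ is already connected.)

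Next I would fix an arbitrary vertex $w\in V(G)$ and pass to eccentricities. Using the pointwise distance inequality,
\[
e_{G'}(w)=\max_{x\in V(G')} d_{G'}(w,x)\le \max_{x\in V(G)} d_G(w,x)=e_G(w).
\]
Summing this inequality over all $w\in V(G)$ gives
\[
\varepsilon(G')=\sum_{w\in V(G')} e_{G'}(w)\le \sum_{w\in V(G)} e_G(w)=\varepsilon(G),
\]
which is the claimed bound.

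There is really no serious obstacle here; the only point that deserves a line of care is that the maximum defining $e_{G'}(w)$ ranges over exactly the same vertex set as the one defining $e_G(w)$, so the monotonicity transfers cleanly from distances to eccentricities without any boundary issues. I would not attempt to characterize the equality case here since the statement only claims the inequality $\varepsilon(G)\ge\varepsilon(G')$.
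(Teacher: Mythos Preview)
Your argument is correct: adding an edge can only shorten shortest paths, hence each eccentricity can only drop, and summing gives $\varepsilon(G')\le\varepsilon(G)$. The paper actually states this lemma without proof, treating it as an elementary observation, so your write-up supplies exactly the routine justification the authors left implicit.
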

By Lemma \ref{edge}, it can be observed that among all graphs on $n$ vertices, the total eccentricity index is minimized by  $K_n$ and maximized by a tree. We have $\varepsilon(K_n)=n$.

Let  $v\in V(G)$ and $|V(G)|\geq 2.$ For $l,k \geq 1,$  let $G_{k,l}$ be the graph obtained from $G$ by attaching two new paths $P:vv_{1}v_{2}\cdots v_{k}$ and $Q:vu_{1}u_{2}\cdots u_{l}$  at $v$, where $u_{1},u_{2},\ldots,u_{l}$ and $v_{1},v_{2},\ldots,v_{k}$ are distinct new vertices. By $G_{0,l}$ we mean, attaching a path of length $l$ at $v$. Let ${\widetilde G}_{k,l}$ be the graph obtained from $G_{k,l}$ by removing the edge $\{v_{k-1},v_{k}\}$ and adding the edge $\{u_{l},v_{k}\} $. Observe that the graph ${\widetilde G}_{k,l}$ is isomorphic to the graph $G_{k-1,l+1}$. We say that ${\widetilde G}_{k,l}$ is obtained from $G_{k,l}$ by {\em grafting} an edge. 

\begin{lemma}[\cite{I}, Theorem 2.1 and \cite{Tz2}, Lemma 4.5] \label{Grafting}
If $1\leq k \leq l,$ then $\varepsilon(G_{k-1,l+1})>\varepsilon(G_{k,l}).$
\end{lemma}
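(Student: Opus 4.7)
My plan is to verify the strict inequality by a vertex-by-vertex calculation. Set $\delta(w):=e_{\widetilde G_{k,l}}(w)-e_{G_{k,l}}(w)$; since the two graphs share a vertex set and differ only in that $v_k$ is attached to $v_{k-1}$ in $G_{k,l}$ and to $u_l$ in $\widetilde G_{k,l}$, the goal is to show $\sum_w\delta(w)>0$. Throughout, set $\eta:=e_G(v)\ge 1$ (using $|V(G)|\ge 2$), and use that $v$ is a cut vertex in both graphs separating $V(G)$ from the two paths.

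The first step is to read off closed-form eccentricities in each graph, classified by vertex type. Because each path is attached at $v$ and $k\le l$ (so the ``$Q$-branch'' dominates the ``$P$-branch'' in $G_{k,l}$, and similarly in $\widetilde G_{k,l}$), each eccentricity is a maximum of a few linear expressions. For example, for $w\in V(G)$,
\[
 e_{G_{k,l}}(w)=\max\bigl(e_G(w),\,d_G(w,v)+l\bigr),\qquad e_{\widetilde G_{k,l}}(w)=\max\bigl(e_G(w),\,d_G(w,v)+l+1\bigr);
\]
for $w=v_i$ with $1\le i\le k-1$, the formulas are $i+\max(l,\eta)$ and $i+\max(l+1,\eta)$; for $w=v_k$ they are $k+\max(l,\eta)$ and $l+\max(k,\eta+1)$; and for $w=u_j$ with $1\le j\le l$ they are $\max(l-j,\,j+\max(k,\eta))$ and $\max(l-j+1,\,j+\max(k-1,\eta))$.

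I then split on $\eta$ into three cases and add up. In case \textbf{(B)} $k\le\eta\le l$ and case \textbf{(C)} $\eta\ge l+1$, most $\delta(w)$ are manifestly nonnegative; in particular $\delta(v_k)\ge 1$ (namely $\eta+1-k$ and $l+1-k$ respectively), while the $u_j$ contributions are nonnegative, so summation immediately gives a positive total. Case \textbf{(A)} $\eta\le k-1$ (which forces $k\ge 2$) is the delicate one: the triangle inequality $e_G(w)\le d_G(w,v)+\eta\le d_G(w,v)+l$ yields $\delta(w)=1$ for every $w\in V(G)$ and $\delta(v_i)=1$ for $1\le i\le k-1$, while $\delta(v_k)=0$; but the $u_j$'s split into a $+1$ region (small $j$, where $l-j$ dominates) and a $-1$ region (large $j$, where $j+k$ dominates).

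The principal obstacle is to show $\sum_{j=1}^l\delta(u_j)=-k$ in case (A); this is the only place where negative contributions have to cancel against positive ones from other classes, and it requires a short case split on the parity of $l-k$ to handle the boundary value $j\approx(l-k)/2$. Once this identity is in hand, the case (A) total is $|V(G)|+(k-1)+0-k=|V(G)|-1\ge 1$, so $\sum_w\delta(w)>0$ in every case, which proves $\varepsilon(\widetilde G_{k,l})>\varepsilon(G_{k,l})$, i.e., $\varepsilon(G_{k-1,l+1})>\varepsilon(G_{k,l})$.
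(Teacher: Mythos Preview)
The paper does not give its own proof of this lemma; it is quoted from Ili\'c \cite{I} and Tang--Zhou \cite{Tz2}. Your argument is a correct, self-contained verification. The eccentricity formulas you record are accurate, the three-way split on $\eta$ is the natural one, and the delicate computation $\sum_{j=1}^{l}\delta(u_j)=-k$ in Case~(A) checks out in both parities of $l-k$; since $|V(G)|\ge 2$ this yields a strictly positive total. Cases~(B) and~(C) are indeed immediate once one notes that $\max(k-1,\eta)=\max(k,\eta)=\eta$ there, so the $u_j$ contributions are termwise nonnegative while $\delta(v_k)\ge 1$.

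Compared with the cited proofs, your approach is the explicit bookkeeping version: Ili\'c's argument is organized around the observation that the diameter increases by exactly one under the grafting move and then bounds how many vertices can have their eccentricity drop, while Tang--Zhou give a short induction. Your case split by the size of $e_G(v)$ accomplishes the same thing more computationally; it is slightly longer but has the advantage of producing the exact value $\varepsilon(G_{k-1,l+1})-\varepsilon(G_{k,l})=|V(G)|-1$ in the regime $\eta\le k-1$, which is sharper information than the bare strict inequality.
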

In the following lemma, we compare the total eccentricity index of two graphs, where one is obtained from the other by some graph perturbation.
\begin{lemma}\label{Path-movement}
Let $H_1$ and $H_2$ be two connected graphs with $|V(H_1)|, |V(H_2)|\geq 2$ and $P=v_1v_2\ldots v_d$ be a path on $d\geq 2$ vertices. Suppose $u\in V(H_1)$ and $v\in V(H_2)$. Let $G$ be the graph obtained from $H_1, H_2$ and $P$ by identifying the vertices $u, v$ and $v_1.$ Let $G'$ be the graph obtained from $H_1, H_2$ and $P$ by identifying the vertices $u$ with $v_1$ and $v$ with $v_d$ (See Figure \ref{Fgr_pathmovement}). Then $\varepsilon(G')> \varepsilon(G).$ 
\end{lemma}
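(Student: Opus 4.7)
The strategy is to construct a bijection $\phi:V(G)\to V(G')$, verify $e_{G'}(\phi(x))\ge e_G(x)$ vertex by vertex, and identify one vertex where the inequality is strict. First, observe that $G$ and $G'$ are each preserved (up to isomorphism) under swapping the roles of $(H_1,u)$ and $(H_2,v)$: in $G$ the swap is trivial since $u=v$, and in $G'$ it merely reverses the path $P$. So I may assume without loss of generality that $a:=e_{H_1}(u)\ge b:=e_{H_2}(v)$. I then take $\phi$ to be the identity on $V(H_1)\setminus\{u\}$, on $V(H_2)\setminus\{v\}$, and on $\{v_2,\ldots,v_{d-1}\}$; to send the common vertex $u=v=v_1\in V(G)$ to $u\in V(H_1)\subset V(G')$; and to send the path endpoint $v_d\in V(G)$ to $v\in V(H_2)\subset V(G')$.

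Using the natural partition of $V(G)$ (resp.\ $V(G')$) into $H_1$, $H_2$, and the path, one writes each eccentricity as a maximum over three ``directional'' distances, obtaining closed forms such as $e_G(x)=\max\{e_{H_1}(x),\,d_{H_1}(x,u)+\max(d-1,b)\}$ versus $e_{G'}(x)=\max\{e_{H_1}(x),\,d_{H_1}(x,u)+d-1+b\}$ for $x\in V(H_1)$, and analogous formulas for the other vertex classes. The inequality $e_{G'}(\phi(x))\ge e_G(x)$ then reduces to elementary max-comparisons: $d-1+b\ge\max(d-1,b)$ on the $H_1$ side, its symmetric version on the $H_2$ side, and for interior $v_i$ the WLOG $a\ge b$ collapses $e_G(v_i)=\max\{(i-1)+a,\,d-i\}$ so that $(d-i)+b\ge d-i$ suffices; the boundary case $v_d\mapsto v$ gives equality $d-1+a$ on both sides.

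For strict increase, I pick $v^*\in V(H_2)$ with $d_{H_2}(v^*,v)=b$; this exists and differs from $v$ because $b\ge 1$. The triangle inequality gives $e_{H_2}(v^*)\le 2b\le b+\max(d-1,a)$, so in each formula the ``bridge'' term dominates: $e_G(v^*)=b+\max(d-1,a)$ and $e_{G'}(v^*)=b+d-1+a$. Hence $e_{G'}(v^*)-e_G(v^*)=\min(d-1,a)\ge 1$, and combining with the non-negative contributions from the other vertices yields $\varepsilon(G')-\varepsilon(G)\ge 1$.

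The main technical obstacle is the interior-path inequality. Without the WLOG reduction $a\ge b$, one can have $e_{G'}(v_i)<e_G(v_i)$ for $i$ close to $d$: the $H_2$-side contribution is $(i-1)+b$ in $e_G(v_i)$ but only $(d-i)+b$ in $e_{G'}(v_i)$, and the former can exceed the latter. Exploiting the $(H_1,u)\leftrightarrow(H_2,v)$ symmetry (which also reverses $P$) at the outset is what makes the pointwise bijection argument uniform in $i$.
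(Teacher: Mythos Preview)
Your proof is correct and follows the same overall strategy as the paper's: establish that eccentricities do not decrease under a natural bijection $V(G)\to V(G')$, then exhibit one vertex (in both arguments, a point of $H_2$ farthest from $v$) where the increase is strict. The difference lies in how the pointwise step is executed. The paper argues that $d_{G'}(x,y)\ge d_G(x,y)$ for all pairs, handling the case $x\in V(H_2)$, $y\in V(P)$ by a ``without loss of generality'' appeal to the $H_1$ case; that symmetry is actually not valid (since $H_2$ is attached at $v_1$ in $G$ but at $v_d$ in $G'$, the distance from $H_2$ to $v_i$ can drop), and consequently the pointwise claim $e_{G'}(v_i)\ge e_G(v_i)$ can fail for interior path vertices when $e_{H_2}(v)>e_{H_1}(u)$ --- precisely the obstruction you identify in your last paragraph. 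Your device of imposing $a\ge b$ at the outset via the $(H_1,u)\leftrightarrow(H_2,v)$ swap (which reverses $P$ in $G'$) is exactly what repairs this and makes the pointwise comparison go through uniformly, so your treatment is in fact more careful on this point than the paper's. Your closed-form computation $e_{G'}(v^*)-e_G(v^*)=\min(d-1,a)\ge 1$ for the strict vertex is also cleaner than the paper's case analysis.
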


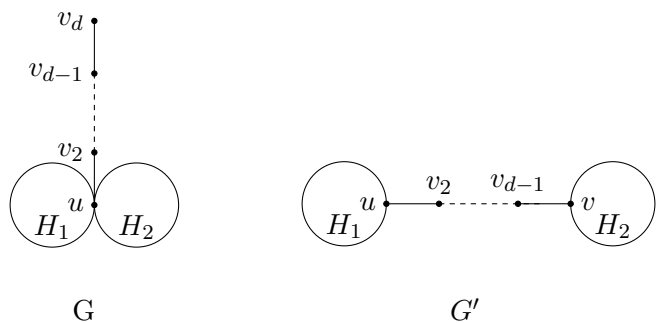
\begin{figure}[h!]
\begin{center}
\begin{tikzpicture}[scale =.7]
\draw (0.2,0) node [below]{$H_1$} circle [radius=.8cm]; 
\draw (1.8,0) node [below]{$H_2$} circle [radius=.8cm];
\draw (.8,-2) node {G};
\filldraw (1,0) node [left] {$u$} circle [radius=.5mm];
\filldraw (1,1) node [left] {$v_2$} circle [radius=.5mm];
\filldraw (1,2.5) node [left] {$v_{d-1}$} circle [radius=.5mm];
\filldraw (1,3.5) node [left] {$v_d$} circle [radius=.5mm];
\draw (1,0)--(1,1);
\draw (1,1) [dash pattern=on 2pt off 2pt]--(1,2.5);
\draw (1,2.5)--(1,3.5);
\end{tikzpicture}
\hskip 1.5 cm
\begin{tikzpicture}[scale= .7]
\draw (0.2,0) node [below]{$H_1$} circle [radius=.8cm];
\draw (5.3,0) node [below]{$H_2$} circle [radius=.8 cm];
\draw (2.5,-2) node {$G'$};
\filldraw(1,0) node [left] {$u$} circle [radius=.5mm];
\filldraw(2,0) node [above]{$v_2$} circle [radius=.5mm];
\filldraw(3.5,0) node[above]{$v_{d-1}$} circle [radius=.5mm];
\filldraw(4.5,0) node[right]{$v$} circle [radius=.5mm];
\draw (1,0)--(2,0);
\draw (2,0) [dash pattern= on 2pt off 2pt]--(4,0);
\draw(3.5,0)--(4.5,0);
\end{tikzpicture}
\end{center}
\caption{The graphs $G$ and $G'$}\label{Fgr_pathmovement}
\end{figure}

\begin{proof}
Let $x,y\in V(H_1)$. Then from the construction of both $G$ and $G'$ (see Figure \ref{Fgr_pathmovement}), it is clear that the length of all the shortest paths between $x$ and $y$ remain unchanged in both $G$ and $G'$. So, $d_{G'}(x,y)=d_G(x,y)$. Similarly, for any two vertices either in $H_2$ or  in $P$, the distance between them remain unchanged in both $G$ and $G'$. Now take one vertex in $H_i$, $i=1,2$ and the other vertex is in $P$. Without loss of generality, suppose $z\in V(H_1)$ and $w\in V(P)$. Then
\begin{align*}
d_{G'}(z,w)&=d_{G'}(z,u)+d_{G'}(u,w)\\
   &=d_G(z,u)+d_G(u,w)\\
   &=d_G(z,w).
\end{align*} 
Finally suppose $a\in V(H_1)$ and $b\in V(H_2)$. Then $d_{G'}(a,b)=d_{G'}(a,u)+d_{G'}(u,v)+d_{G'}(v,b) >d_{G'}(a,u)+d_{G'}(v,b)=d_G(a,u)+d_G(u,b)=d_G(a,b)$.
So, while moving from $G$ to $G'$, the eccentricity of each vertex either increases or remains same. To complete the proof, it is enough to show that there exist a vertex $x_0$ belongs to both $V(G)$ and $V(G')$ such that $e_{G'}(x_0)>e_{G}(x_0).$

Without loss of generality, take $diam(H_1)=D_1\geq diam(H_2)=D_2.$ Let $x_0\in V(H_2)$ be a vertex farthest from $v$. Then $d_{H_2}(x_0,v)\geq \frac{D_2}{2}.$ Similarly, there exists $y_0\in V(H_1)$ farthest from $u$ such that $d_{H_1}(y_0,u)\geq \frac{D_1}{2}.$ Then $d_G(x_0,y_0)=d_G(x_0,u)+d_G(u,y_0)\geq D_2$. So, an eccentric vertex of $x_0$ in $G$ lies outside $H_2$. Then $e_{G}(x_0)=d_G(x_0,v_d)$ or $e_{G}(x_0)=d_G(x_0,y_0).$ But $e_{G'}(x_0)=d_{G'}(x_0,v)+d_{G'}(v,u)+d_{G'}(u,y_0)>e_G(x_0).$ Hence $\varepsilon(G')>\varepsilon(G).$ 
\end{proof}
 
\begin{corollary} [\cite{Dgs}, Corollary 1 and \cite{Tz2}, Proposition 4.3]
Among all trees on $n$ vertices, the total eccentricity index  is maximised by the path and minimised by the star.
\end{corollary}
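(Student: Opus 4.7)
The plan is to treat the two bounds separately: the lower bound (star) by a direct eccentricity count, and the upper bound (path) by iterating Lemma \ref{Grafting}.

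\textbf{Minimum by the star.} For $n\geq 3$, the star $K_{1,n-1}$ has one central vertex of eccentricity $1$ and $n-1$ leaves of eccentricity $2$, so $\varepsilon(K_{1,n-1})=2n-1$. For any other tree $T$ on $n$ vertices, I would note that $T\not\cong K_{1,n-1}$ forces $diam(T)\geq 3$. Using the standard fact that for trees $rad(T)=\lceil diam(T)/2\rceil$, every vertex has eccentricity at least $2$, whence
\[
\varepsilon(T)\geq 2n>2n-1=\varepsilon(K_{1,n-1}),
\]
so the star is the unique minimizer.

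\textbf{Maximum by the path.} Let $T\not\cong P_n$ be a tree on $n$ vertices. I want to exhibit a tree $T'$ on $n$ vertices with $\varepsilon(T')>\varepsilon(T)$; iterating then forces the unique maximizer to be $P_n$. Since $T$ is not a path, it has a vertex of degree at least $3$. Root $T$ at any leaf and choose a vertex $v$ of degree $\geq 3$ at maximum distance from the root. By maximality, no proper descendant subtree of $v$ contains a branching vertex, so every descendant subtree of $v$ is a pendant path in $T$; hence at least $\deg(v)-1\geq 2$ pendant paths are attached at $v$. Pick two such pendant paths of lengths $k\leq l$, and let $H$ be the subgraph obtained by deleting the internal and end vertices of those two paths from $T$ (keeping $v$). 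Then $T$ is exactly the graph $H_{k,l}$ in the notation preceding Lemma \ref{Grafting}, and Lemma \ref{Grafting} gives
\[
\varepsilon(H_{k-1,l+1})>\varepsilon(H_{k,l})=\varepsilon(T),
\]
so $T':=H_{k-1,l+1}$ does the job. The tree $H_{k-1,l+1}$ has either one fewer branching vertex than $T$ (when $k=1$) or the same branching structure with a shorter shortest pendant path at $v$, so this process terminates at $P_n$.

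\textbf{Anticipated obstacle.} The only delicate point is justifying that a non-path tree always admits a branching vertex carrying two pendant paths, which the ``deepest branching vertex'' argument above handles cleanly. Once that structural fact is in hand, both extrema follow with minimal calculation—the minimum from an eccentricity counting argument and the maximum from a single-step application of Lemma \ref{Grafting}.
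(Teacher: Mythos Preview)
Your proof is correct. The paper does not supply its own argument for this corollary; it is quoted from the literature with references to \cite{Dgs} and \cite{Tz2} and stated without proof immediately after Lemma~\ref{Path-movement}. Your route---iterating Lemma~\ref{Grafting} for the maximum and a direct radius/diameter eccentricity count for the minimum---is the natural one given the paper's toolkit, and the ``deepest branching vertex'' device correctly guarantees two pendant paths at a vertex whose remaining piece $H$ has $|V(H)|\geq 2$, so the hypothesis of Lemma~\ref{Grafting} is met.
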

 An easy calculation gives $\varepsilon(K_{1,n-1})=2n-1$ and $\varepsilon(P_n)=\lfloor\frac{3n^2-2n}{4}\rfloor$. Since $\varepsilon(K_n)=n$, so for any connected graph $G$ with $n$ vertices, $$n\leq \varepsilon(G)\leq  \left\lfloor\frac{3n^2-2n}{4}\right\rfloor.$$
  
A $\it{block}$ in $G$ is a maximal $2$-connected subgraph of $G$. We say a block $B$ in $G$, a {\it pendant block} if there is exactly one cut vertex of $G$ in $B$. Two blocks in $G$ are said to be {\it adjacent blocks}  if they share a common cut vertex. Let $B_G$ be the block graph corresponding to $G$ with $V(B_G)$ as the set of blocks of $G$ and two vertices $u$ and $v$ in $B_G$ are adjacent whenever the  corresponding blocks contains a common cut vertex of $G$. A block corresponding to a central vertex in $B_G$  is called a {\it central block} of G.  
  
\begin{lemma}
Let $B$ and $B'$ be two blocks in a graph $G$. Suppose  $d(B,B')$ is maximum among all pairs of blocks in $G$. Then both $B$ and $B'$ are pendant blocks.
\end{lemma}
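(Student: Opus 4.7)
The plan is a proof by contradiction using the block-cut tree of $G$. I would assume that $B$ is not a pendant block, so $V(B)$ contains at least two distinct cut vertices of $G$, and then produce another block $B_1$ with $d(B_1, B') > d(B, B')$, contradicting the maximality of $d(B, B')$.

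First, let $c_2$ denote the cut vertex of $B$ lying on the (unique) block-cut tree geodesic from $B$ to $B'$, and let $c_1 \in V(B)$ be any other cut vertex (which exists by the non-pendant assumption). I would pick a block $B_1 \neq B$ containing $c_1$; such a block exists because every cut vertex belongs to at least two blocks. The core of the argument is the claim that every $x$-to-$y$ path in $G$ with $x \in V(B_1)$ and $y \in V(B')$ must pass through $c_1$ and then $c_2$: deleting $c_1$ from $G$ separates $V(B_1) \setminus \{c_1\}$ from $V(B')$, and deleting $c_2$ separates the component of $G - c_2$ containing $B \setminus \{c_2\}$ (and hence $c_1$) from the one containing $B'$. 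This gives $d(x, y) = d(x, c_1) + d(c_1, c_2) + d(c_2, y)$, and minimizing (taking $x = c_1 \in V(B_1)$, so that $\min_x d(x, c_1) = 0$) yields
\[
d(B_1, B') = d(c_1, c_2) + d(c_2, B') \geq 1 + d(B, B') > d(B, B'),
\]
using $d(c_1, c_2) \geq 1$ since $c_1 \neq c_2$ and $d(c_2, B') \geq d(B, B')$ since $c_2 \in V(B)$. This contradicts the choice of $(B, B')$, so $B$ is pendant; the identical argument with the roles of $B$ and $B'$ reversed shows $B'$ is pendant too.

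The main (minor) obstacle is the careful verification that shortest paths really must pass through the two cut vertices in the stated order. This reduces to two applications of the definition of a cut vertex, together with the small bookkeeping check that $c_1 \notin V(B')$ --- otherwise $B$ and $B'$ would share the two distinct vertices $c_1$ and $c_2$, violating the fact that distinct blocks of a graph intersect in at most one vertex. Once this structural check is in place, the displayed inequality is a one-line distance computation and no further difficulty is anticipated.
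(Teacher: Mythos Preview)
Your proposal is correct and follows essentially the same route as the paper: assume $B$ is not pendant, pick a second cut vertex of $B$, take another block through it, and show that block is strictly farther from $B'$. The paper's version is terser (it does not invoke the block--cut tree explicitly, just picks cut vertices $u,v$ in $B$ with $d(B,B')=d(\{u\},B')$ and a block $B''\ni v$), while yours spells out the path-through-cut-vertices reasoning more carefully. One tiny imprecision: your justification for $c_1 \notin V(B')$ assumes $c_2 \in V(B')$, which need not hold; the cleaner argument is that $c_1 \in V(B) \cap V(B')$ would force the block--cut tree geodesic to go through $c_1$, whence $c_2 = c_1$, a contradiction.
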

\begin{proof}
Suppose $B$ is not a pendant block. Then $B$ contains at least two cut vertices of $G$. Let $u$ and $v$ be two cut vertices of $G$ in $B$ such that $d(B,B')=d(\{u\},B')$. Since $v$ is a cut vertex of $G$ there exists a block $B''$ (other than $B$ and $B'$) containing $v$. Then $d(B'',B')=d(\{v\},B')=d(v,u)+d(\{u\},B')>d(B,B'),$ which is a contradiction. Hence $B$ is a pendant block of $G$. Similarly, we can show that $B'$ is also a pendant block of $G$.
\end{proof}
  
\begin{lemma}\label{Block-transform}
Let $B$ be a block of a  graph $G$. Suppose $|V(B)|=r\geq 3$ and at most one vertex of $B$ is a cut vertex in $G$. Let $G'$ be the graph obtained  from $G$ by replacing the block $B$ with  the cycle $C_r$ such that the cut vertices (if any) of $G$ and $G'$ remain same. Then $\varepsilon(G')\geq \varepsilon(G).$  
\end{lemma}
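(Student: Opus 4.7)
The plan is to split according to the two situations the hypothesis allows, handle the easy cases by a pointwise eccentricity comparison, and for the remaining (hard) case reduce the inequality to a stochastic-dominance comparison of BFS level sets in $B$ versus $C_r$.

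Since $B$ is a block with $|V(B)|=r\ge 3$, $B$ is $2$-connected. The hypothesis that at most one vertex of $B$ is a cut vertex of $G$ splits the argument into two cases: either $B$ contains no cut vertex of $G$, in which case $B=G$ and $G'=C_r$, or $B$ is a pendant block of $G$ with unique cut vertex $w$. In the first case I would use Menger's theorem to get two internally disjoint $u$-$v$ paths inside $B$ for any $u,v\in V(B)$; since together they use at most $r$ vertices, $d_B(u,v)\le \lfloor r/2\rfloor$. Hence $e_B(v)\le \lfloor r/2\rfloor = e_{C_r}(v)$ for every $v\in V(B)$, and summing yields $\varepsilon(G)\le r\lfloor r/2\rfloor = \varepsilon(C_r)=\varepsilon(G')$.

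For the pendant-block case, let $H=G-(V(B)\setminus\{w\})$ and set $D_H=e_H(w)$. Because $w$ is a cut vertex in both $G$ and $G'$, every shortest path between $V(B)\setminus\{w\}$ and $V(H)\setminus\{w\}$ passes through $w$. This yields the decompositions $e_G(v)=\max(e_H(v),\, d_H(v,w)+e_B(w))$ and $e_{G'}(v)=\max(e_H(v),\, d_H(v,w)+e_{C_r}(w))$ for every $v\in V(H)$. Since $e_{C_r}(w)=\lfloor r/2\rfloor\ge e_B(w)$, I immediately obtain $e_{G'}(v)\ge e_G(v)$ for all $v\in V(H)$.

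The delicate part is the vertices in $V(B)\setminus\{w\}$, because $d_{C_r}(v,w)$ may be strictly smaller than $d_B(v,w)$, so individual eccentricities can decrease. I would introduce the non-decreasing function $\phi(x)=\max(\lfloor r/2\rfloor,\, x+D_H)$. Using $e_B(v)\le \lfloor r/2\rfloor$, one has $e_G(v)\le \phi(d_B(v,w))$ while $e_{G'}(v)=\phi(d_{C_r}(v,w))$ exactly. Thus the claim reduces to proving
\[
\sum_{v\in V(B)\setminus\{w\}} \phi(d_{C_r}(v,w)) \ge \sum_{v\in V(B)\setminus\{w\}} \phi(d_B(v,w)).
\]

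The crux, and the main obstacle, is a stochastic-dominance comparison of the BFS level sets $L_j^X=\{u\in V(X):d_X(u,w)=j\}$ for $X\in\{B,C_r\}$. The key combinatorial fact is that in $2$-connected $B$, every layer $L_j^B$ with $1\le j<e_B(w)$ has size at least $2$: since any edge of $B$ joins vertices in the same or adjacent BFS levels, if some $L_j^B$ were a singleton $\{u\}$, then deleting $u$ would separate $\bigcup_{i>j}L_i^B$ from $w$, contradicting $2$-connectivity. Combined with $e_B(w)\le \lfloor r/2\rfloor$, this forces $\sum_{i\le j}|L_i^B|\ge \sum_{i\le j}|L_i^{C_r}|$ for every $j$, i.e., the $C_r$-distance distribution from $w$ stochastically dominates the $B$-distance distribution. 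Since $\phi$ is non-decreasing, the displayed inequality follows, and combined with the pointwise bound on $V(H)$ this gives $\varepsilon(G')\ge \varepsilon(G)$.
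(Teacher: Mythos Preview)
Your proof is correct and takes a genuinely different route from the paper's. The paper constructs an explicit bijection between $V(B)$ and $V(C_r)$: it fixes a largest cycle $C_m\subseteq B$ through $w$, keeps the vertices of $C_m$ in place, and inserts the remaining $r-m$ vertices of $B$ at the antipodal position to $w$ on the cycle; it then verifies \emph{pointwise} that $e_{G}(x)\le e_{G'}(x)$ for every vertex under this correspondence, using that any $x\in V(B)$ lies within distance $\lfloor m/2\rfloor$ of $w$ (two internally disjoint $w$--$x$ paths form a cycle through $w$, so $2d_B(x,w)\le m$). You instead avoid any explicit bijection and compare only the \emph{multisets} of distances from $w$, reducing the inequality on $V(B)\setminus\{w\}$ to a stochastic-dominance statement that follows from the clean observation that intermediate BFS layers in a $2$-connected graph have size at least $2$. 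The paper's argument buys the slightly stronger pointwise conclusion (for its particular labelling), while yours is more conceptual, uses only Menger and the BFS-layer fact, and would adapt immediately to any eccentricity-based invariant that is a sum of a non-decreasing function of the distances to $w$.
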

\begin{proof}
Since $|V(B)|\geq 3$ and $B$ is a block in $G$, so $B$ must contain a cycle. Let $x\in V(G)$ and  $y$ be an eccentric vertex of $x$ in $G$. 

First suppose $G$ has no cut vertex. Then $G=B$ and $G'=C_r.$ Let $C_l:v_1v_2\ldots v_lv_1$ be a largest cycle in $B$.  Since $G$ has no cut vertex, so there exists a cycle $C_k$ containing both $x$ and $y$. As $C_l$ is a largest cycle of $G$, so $k\leq l$. Therefore, $e_G(x)\leq \lfloor \frac{k}{2} \rfloor \leq \lfloor \frac{l}{2} \rfloor \leq \lfloor \frac{r}{2} \rfloor.$ But the eccentricity of any vertex of $G'$ is $\lfloor \frac{r}{2} \rfloor$. Hence the result follows for this case. 
  
Now suppose $w\in V(B)$  is the cut vertex in $G.$  Let $C_m:wv_2\ldots v_m w$ be a largest cycle in $B$ containing $w$. Delete the vertices of $B$ not in $C_m$ and insert those $r-m$ vertices between  $v_{\lceil \frac{m}{2} \rceil}$ and $v_{\lceil \frac{m}{2} \rceil+1}$ to form $G'$ from $G$. Let $S=(V(G)\setminus V(B))\cup\{w\}\subseteq V(G)$. Since $(V(G)\setminus V(B))\cup\{w\}=(V(G')\setminus V(C_r))\cup\{w\}$ so $S\subseteq V(G')$.

 First suppose $x\in S\subseteq V(G)$. If $y\in S$  then $e_{G}(x)=d_{G}(x,y)=d_{G'}(x,y)\leq e_{G'}(x).$ If $y\in V(G)\setminus S$, then $e_G(x)=d_{G}(x,y)=d_G(x,w)+d_G(w,y)\leq d_{G'}(x,w)+\lfloor \frac{r}{2} \rfloor =e_{G'}(x).$\\

 Now suppose $x\in V(G)\setminus S= V(G')\setminus S.$ If $y\in V(G)\setminus S$, $e_{G}(x)=d_{G}(x,y)\leq \lfloor \frac{r}{2} \rfloor\leq e_{G'}(x).$ If $y\in S$, then $e_G(x)=d_G(x,y)=d_G(x,w)+d_G(w,y).$ In $G'$, if $x\in \{v_2,\ldots, v_m\}$ then $d_G(x,w)\leq  d_{G'}(x,w)$ and hence $e_G(x)=d_G(x,w)+d_G(w,y)\leq d_{G'}(x,w)+d_{G'}(w,y)=d_{G'}(x,y)\leq e_{G'}(x)$. In $G'$, if $x\notin \{v_2,\ldots, v_m\}$ then $d_{G'}(x,w)\geq \lfloor \frac{m}{2} \rfloor$. Since $C_m$ is the largest cycle in $B$ containing $w$, so $d_{G}(x,w)\leq \lfloor \frac{m}{2} \rfloor\leq d_{G'}(x,w)$. Therefore, $e_G(x)=d_G(x,w)+d_G(w,y)\leq d_{G'}(x,w)+d_{G'}(w,y)=d_{G'}(x,y)\leq e_{G'}(x)$. This completes the proof. 
\end{proof}

\section{Graphs with fixed number of pendant vertices}
  
We denote the set of all connected graphs of order $n$ with $k$ pendant vertices by $\mathfrak{H}_{n,k}$. By $\mathfrak{T}_{n,k}$, we denote  the set of all trees on $n$ vertices with $k$ pendant vertices. Clearly  $\mathfrak{T}_{n,k}\subseteq \mathfrak{H}_{n,k}$. Note that a graph $G\in \mathfrak{H}_{n,k}$ has $n$ pendant vertices if and only if $G\cong K_2,$ also $K_{1,n-1}$ is the only element of $\mathfrak{H}_{n,n-1}$. So, we consider $0\leq k\leq n-2$. 
Let $T(l,m,d)$ be the tree obtained by attaching $l$ pendant vertices at one end and $m$ pendant vertices to the another end of the path $P_d.$ So, $ T(l,m,d)\in \mathfrak{T}_{l+m+d,l+m}$. Note that all the elements  of $\mathfrak{H}_{n,n-2}$ are of the form $T_{l,n-l-2,2}$, where $1\leq l\leq n-3$. It is easy to check that eccentricity of $T_{l,n-l-2,2}$ is same for any $l$, $1\leq l\leq n-3$. So to study the total eccentricity index over $\mathfrak{H}_{n,k}$, we consider $0\leq k\leq n-3.$

By $sG$ we mean, the graph consisting of $s$ copies of $G.$ Let $T_{n,k} \in \mathfrak{T}_{n,k} $ be the tree that has a vertex $v$ of degree $k$ and $T_{n,k} - v = r P_{q+1} \cup (k-r) P_q$, where $q=\lfloor\frac{n-1}{k} \rfloor$ and $r = n-1 -kq$. Here, we have $0\leq r <k.$

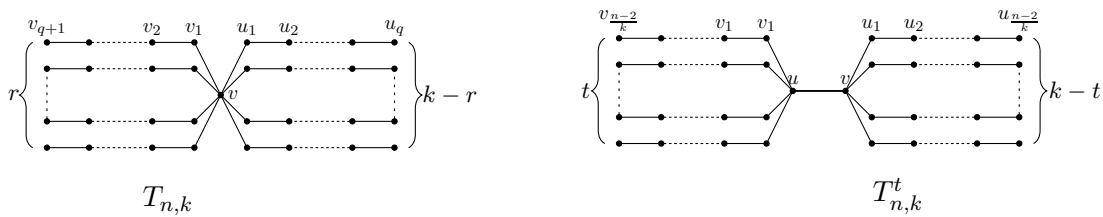
\begin{figure}[h]
\begin{center}
\begin{tikzpicture}[scale=.7]
\filldraw node[scale=.7](0,0)[right]{$v$}circle [radius=0.5mm];

\filldraw (.5,1)node[scale=.7][above]{$u_{1}$}circle [radius=0.5mm];
\filldraw (0.5,0.5)circle [radius=.5mm];
\filldraw (0.5,-0.5)circle [radius=.5mm];
\filldraw (0.5,-1)circle [radius=.5mm];
\filldraw (-0.5,1)node[scale=.7][above]{$v_{1}$}circle [radius=.5mm];
\filldraw (-0.5,-0.5)circle [radius=.5mm];
\filldraw (-0.5,-1)circle [radius=.5mm];
\filldraw (-0.5,0.5)circle [radius=.5mm];

\filldraw (1.3,1)node[scale=.7][above]{$u_{2}$}circle [radius=0.5mm];
\filldraw (1.3,0.5)circle [radius=.5mm];
\filldraw (1.3,-0.5)circle [radius=.5mm];
\filldraw (1.3,-1)circle [radius=.5mm];
\filldraw (-1.3,1)node[scale=.7][above]{$v_{2}$}circle [radius=.5mm];
\filldraw (-1.3,-0.5)circle [radius=.5mm];
\filldraw (-1.3,-1)circle [radius=.5mm];
\filldraw (-1.3,0.5)circle [radius=.5mm];

\filldraw (2.5,1)circle [radius=0.5mm];
\filldraw (2.5,0.5)circle [radius=.5mm];
\filldraw (2.5,-0.5)circle [radius=.5mm];
\filldraw (2.5,-1)circle [radius=.5mm];
\filldraw (-2.5,1) circle[radius=.5mm];
\filldraw (-2.5,-0.5)circle [radius=.5mm];
\filldraw (-2.5,-1)circle [radius=.5mm];
\filldraw (-2.5,0.5)circle [radius=.5mm];

\filldraw (3.3,1)node[scale=.7][above]{$u_{q}$} circle[radius=0.5mm];
\filldraw (3.3,0.5)circle [radius=.5mm];
\filldraw (3.3,-0.5)circle [radius=.5mm];
\filldraw (3.3,-1)circle [radius=.5mm];
\filldraw (-3.3,1)node[scale=.7][above]{$v_{q+1}$}circle [radius=.5mm];
\filldraw (-3.3,-0.5)circle [radius=.5mm];
\filldraw (-3.3,-1)circle [radius=.5mm];
\filldraw (-3.3,0.5)circle [radius=.5mm];

\draw (-1.3,1)--(-0.5,1)--(0,0)--(0.5,1)--(1.3,1);
\draw (-1.3,.5)--(-0.5,.5)--(0,0)--(0.5,.5)--(1.3,.5);
\draw (-1.3,-1)--(-0.5,-1)--(0,0)--(0.5,-1)--(1.3,-1);
\draw (-1.3,-.5)--(-0.5,-.5)--(0,0)--(0.5,-.5)--(1.3,-.5);

\draw (3.3,1)--(2.5,1);
\draw (3.3,.5)--(2.5,.5);
\draw (3.3,-1)--(2.5,-1);
\draw (3.3,-.5)--(2.5,-.5);
\draw (-3.3,1)--(-2.5,1);
\draw (-3.3,.5)--(-2.5,.5);
\draw (-3.3,-1)--(-2.5,-1);
\draw (-3.3,-.5)--(-2.5,-.5);

\draw [dash pattern= on 1pt off 1pt](1.3,1)--(2.5,1);
\draw [dash pattern= on 1pt off 1pt](1.3,.5)--(2.5,.5);
\draw [dash pattern= on 1pt off 1pt](1.3,-1)--(2.5,-1);
\draw [dash pattern= on 1pt off 1pt](1.3,-.5)--(2.5,-.5);
\draw [dash pattern= on 1pt off 1pt](-1.3,1)--(-2.5,1);
\draw [dash pattern= on 1pt off 1pt](-1.3,.5)--(-2.5,.5);
\draw [dash pattern= on 1pt off 1pt](-1.3,-1)--(-2.5,-1);
\draw [dash pattern= on 1pt off 1pt](-1.3,-.5)--(-2.5,-.5);

\draw [dash pattern= on 1pt off 2pt](3.3,.5)--(3.3,-.5);
\draw [dash pattern= on 1pt off 2pt](-3.3,.5)--(-3.3,-.5);

\draw [decorate,decoration={brace,amplitude=5pt},xshift=-2pt,yshift=0pt]
(-3.5,-1) -- (-3.5,1) node [black,midway,xshift=-0.25cm] 
{\footnotesize $r$};

\draw [decorate,decoration={brace,amplitude=5pt, mirror},xshift=2pt,yshift=0pt]
(3.5,-1) -- (3.5,1) node [black,midway,xshift=0.55cm] 
{\footnotesize $k-r$};
\draw(-1,-2) node{$T_{n,k}$};
\end{tikzpicture}
\hskip 1cm
\begin{tikzpicture} [scale=.7]
\filldraw (0,0)node[scale=.7][above]{$v$}circle [radius=0.5mm];
\filldraw (-1,0)node[scale=.7][above]{$u$}circle [radius=0.5mm];

\filldraw (0.5,1)node[scale=.7][above]{$u_{1}$}circle [radius=0.5mm];
\filldraw (0.5,0.5)circle [radius=.5mm];
\filldraw (0.5,-0.5)circle [radius=.5mm];
\filldraw (0.5,-1)circle [radius=.5mm];
\filldraw (-1.5,1)node[scale=.7][above]{$v_{1}$}circle [radius=.5mm];
\filldraw (-1.5,-0.5)circle [radius=.5mm];
\filldraw (-1.5,-1)circle [radius=.5mm];
\filldraw (-1.5,0.5)circle [radius=.5mm];

\filldraw (1.3,1)node[scale=.7][above]{$u_{2}$}circle [radius=0.5mm];
\filldraw (1.3,0.5)circle [radius=.5mm];
\filldraw (1.3,-0.5)circle [radius=.5mm];
\filldraw (1.3,-1)circle [radius=.5mm];
\filldraw (-2.3,1)node[scale=.7][above]{$v_{1}$}circle [radius=.5mm];
\filldraw (-2.3,-0.5)circle [radius=.5mm];
\filldraw (-2.3,-1)circle [radius=.5mm];
\filldraw (-2.3,0.5)circle [radius=.5mm];

\filldraw (2.5,1)circle [radius=0.5mm];
\filldraw (2.5,0.5)circle [radius=.5mm];
\filldraw (2.5,-0.5)circle [radius=.5mm];
\filldraw (2.5,-1)circle [radius=.5mm];
\filldraw (-3.5,1)circle [radius=.5mm];
\filldraw (-3.5,-0.5)circle [radius=.5mm];
\filldraw (-3.5,-1)circle [radius=.5mm];
\filldraw (-3.5,0.5)circle [radius=.5mm];

\filldraw (3.3,1)node[scale=.7][above ]{$u_{\frac{n-2}{k}}$}circle [radius=0.5mm];
\filldraw (3.3,0.5)circle [radius=.5mm];
\filldraw (3.3,-0.5)circle [radius=.5mm];
\filldraw (3.3,-1)circle [radius=.5mm];
\filldraw (-4.3,1)node[scale=.7][above]{$v_{\frac{n-2}{k}}$}circle [radius=.5mm];
\filldraw (-4.3,-0.5)circle [radius=.5mm];
\filldraw (-4.3,-1)circle [radius=.5mm];
\filldraw (-4.3,0.5)circle [radius=.5mm];

\draw (-2.3,1)--(-1.5,1)--(-1,0)-- (0,0)--(0.5,1)--(1.3,1);
\draw (-2.3,.5)--(-1.5,.5)--(-1,0)--(0,0)--(0.5,.5)--(1.3,.5);
\draw (-2.3,-1)--(-1.5,-1)--(-1,0)--(0,0)--(0.5,-1)--(1.3,-1);
\draw (-2.3,-.5)--(-1.5,-.5)--(-1,0)--(0,0)--(0.5,-.5)--(1.3,-.5);
\draw (3.3,1)--(2.5,1);
\draw (3.3,.5)--(2.5,.5);
\draw (3.3,-1)--(2.5,-1);
\draw (3.3,-.5)--(2.5,-.5);

\draw (-3.5,1)--(-4.3,1);
\draw (-3.5,.5)--(-4.3,.5);
\draw (-3.5,-1)--(-4.3,-1);
\draw (-3.5,-.5)--(-4.3,-.5);

\draw [dash pattern= on 1pt off 1pt](1.3,1)--(2.5,1);
\draw [dash pattern= on 1pt off 1pt](1.3,.5)--(2.5,.5);
\draw [dash pattern= on 1pt off 1pt](1.3,-1)--(2.5,-1);
\draw [dash pattern= on 1pt off 1pt](1.3,-.5)--(2.5,-.5);
\draw [dash pattern= on 1pt off 1pt](-2.3,1)--(-3.5,1);
\draw [dash pattern= on 1pt off 1pt](-2.3,.5)--(-3.5,.5);
\draw [dash pattern= on 1pt off 1pt](-2.3,-1)--(-3.5,-1);
\draw [dash pattern= on 1pt off 1pt](-2.3,-.5)--(-3.5,-.5);

\draw [dash pattern= on 1pt off 2pt](3.3,.5)--(3.3,-.5);
\draw [dash pattern= on 1pt off 2pt](-4.3,.5)--(-4.3,-.5);

\draw [decorate,decoration={brace,amplitude=5pt},xshift=-2pt,yshift=0pt]
(-4.5,-1) -- (-4.5,1) node [black,midway,xshift=-0.25cm] 
{\footnotesize $t$};

\draw [decorate,decoration={brace,amplitude=5pt, mirror},xshift=2pt,yshift=0pt]
(3.5,-1) -- (3.5,1) node [black,midway,xshift=0.55cm] 
{\footnotesize $k-t$};

\draw(1,-2) node{$T_{n,k}^t$};
\end{tikzpicture}
\caption{The graphs $T_{n,k}$ and $T_{n,k}^t$}
\end{center}
\end{figure}

For $k|n-2,$ let $T_{n,k}^t \in \mathfrak{T}_{n,k}$ be the tree that has two adjacent vertices $u$ and $v$ with degrees $t+1$ and $k-t+1$, respectively, such that $T_{n,k}^t -u-v=kP_{\frac{n-2}{k}}.$ 

In \cite{Tz2}, the authors have studied the total eccentricity index (in terms of average eccentricity) of trees over $\mathfrak{T}_{n,k}$ and proved the following results.
  
\begin{proposition}[\cite{Tz2}, Proposition 4.7] \label{k-maxtree}
Let $T\in \mathfrak{T}_{n,k}.$ Then

$$\varepsilon(T)\leq \varepsilon(T(l,k-l,n-k)),$$ for any $1\leq l \leq k-1,$ and
$$\varepsilon(T)\geq \begin{cases}
\varepsilon(T_{n,k}) &\mbox{if}\;\; k\nmid n-2,\\
\varepsilon(T_{n,k}^t) &\mbox{if}\;\; k\mid n-2.
\end{cases}$$
for any $1\leq t \leq k-1.$
\end{proposition}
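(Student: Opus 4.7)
Both bounds can be approached by transformation: starting from any $T\in\mathfrak{T}_{n,k}$, I would apply a sequence of local moves that do not decrease (for the upper bound) or do not increase (for the lower bound) the total eccentricity index, until the tree is driven into a canonical extremal shape. The two tools are Lemma \ref{Grafting} (unbalancing two paths attached at a common vertex strictly increases $\varepsilon$) and Lemma \ref{Path-movement} (inserting a path between two nontrivial subgraphs strictly increases $\varepsilon$). Neither move alters the pendant count, because in each case an old pendant loses its pendant status while a new pendant is created at the other end.

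For the upper bound, fix a longest path $x_{0}x_{1}\cdots x_{d}$ between two pendants of $T$. If $T$ is not already of the form $T(l,k-l,n-k)$ there is some internal vertex $x_{i}$ of that path carrying a side-branch $B$; inductively apply Lemma \ref{Grafting} inside $B$ to flatten $B$ into a single pendant path, and then one final application of Lemma \ref{Grafting}, with $v=x_{i}$, the flattened $B$ as the shorter path, and $x_{i}x_{i+1}\cdots x_{d}$ as the longer path, shifts $B$ to hang as a pendant at $x_{d}$. Each such move strictly increases $\varepsilon$ and preserves $k$, and after finitely many iterations $T$ becomes some $T(l,k-l,n-k)$. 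A direct eccentricity count in $T(l,k-l,n-k)$ shows that every pendant has eccentricity $n-k+1$ and every vertex $x_{j}$ on the internal path has eccentricity $\max(j,\,n-k-j+1)$; both are independent of $l$, so $\varepsilon(T(l,k-l,n-k))$ depends only on $n$ and $k$, validating the bound for every admissible $l$.

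For the lower bound, proceed oppositely. Whenever two branching vertices of $T$ are connected by a path of length at least $2$ whose internal vertices all have degree $2$, Lemma \ref{Path-movement} applied in reverse (identifying those two branching vertices along that path) strictly decreases $\varepsilon$. After finitely many identifications the branching of $T$ is concentrated either at a single vertex of degree $k$ or at two adjacent vertices whose degrees sum to $k+2$. Then Lemma \ref{Grafting} applied in the balancing direction equalizes the attached paths as much as possible, again strictly decreasing $\varepsilon$. When $k\nmid n-2$ the procedure terminates at $T_{n,k}$. When $k\mid n-2$ it can terminate at $T_{n,k}$ (one branching vertex, one path of length $q+1$ and $k-1$ of length $q$) or at $T_{n,k}^{t}$ (two adjacent branching vertices, $k$ paths of length $q=(n-2)/k$); a direct computation yields
\[
\varepsilon(T_{n,k})=\varepsilon(T_{n,k}^{t})=\frac{(q+1)(3n-2)}{2},
\]
so both realize the minimum and the bound is valid for every $t\in\{1,\dots,k-1\}$.

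The principal obstacle is the inner grafting in the upper-bound argument: Lemma \ref{Grafting} is stated for two fresh unbranched paths attached at $v$, while a generic side-branch of $T$ may carry branching of its own. Dealing with this demands a double induction, outer on the number of branching vertices internal to the diameter path and inner on the number of branching vertices inside the current branch, together with a pendant-count check at every step. A secondary subtlety is the tie when $k\mid n-2$: the structural moves cannot distinguish $T_{n,k}$ from $T_{n,k}^{t}$, and the equality $\varepsilon(T_{n,k})=\varepsilon(T_{n,k}^{t})$ must be confirmed by direct computation rather than inferred from Lemmas \ref{Grafting} and \ref{Path-movement}.
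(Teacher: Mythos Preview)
The paper does not supply its own proof of this proposition; it is quoted verbatim from Tang--Zhou \cite{Tz2}. So the only question is whether your argument stands on its own, and it does not: the central claim that ``neither move alters the pendant count'' is false for both halves.

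\emph{Upper bound.} Flattening a side-branch $B$ that carries $m\geq 2$ leaves into a single path by iterated use of Lemma~\ref{Grafting} necessarily destroys $m-1$ pendants: at each internal branching vertex of $B$ the very last grafting step is $G_{1,l}\to G_{0,l+1}$, and in that step the old pendant $u_{l}$ acquires a neighbour while no new pendant is created (the vertex $v$ still has degree~$\geq 2$). Moreover, your ``one final application'' at $x_{i}$ does not relocate the flattened $B$ to $x_{d}$. A single grafting step $G_{p,\,d-i}\to G_{p-1,\,d-i+1}$ merely shortens $B$ by one vertex and lengthens the diametral arm by one; iterating to absorb $B$ entirely ends with the step $G_{1,\ell}\to G_{0,\ell+1}$, which again kills a pendant (the former leaf $x_{d}$ now has degree~$2$). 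Under either reading the resulting tree has fewer than $k$ leaves and has left $\mathfrak{T}_{n,k}$, so no contradiction with extremality is obtained.

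\emph{Lower bound.} The reverse of Lemma~\ref{Path-movement} takes $G'$ (with $H_{1}$ and $H_{2}$ joined by the path $v_{1}\cdots v_{d}$) to $G$ (with $H_{1}$, $H_{2}$ and that path all meeting at a single vertex). In $G$ the vertices $v_{2},\dots,v_{d}$ form a pendant path, so $v_{d}$ is a \emph{new} leaf: the move increases the pendant count by one. Subsequent balancing via Lemma~\ref{Grafting} (with both arms of positive length) does not undo this, so your procedure terminates at a spider with more than $k$ legs, not at $T_{n,k}$ or $T_{n,k}^{t}$.

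The ``pendant-count check at every step'' you allude to would therefore fail, not succeed. A workable route needs transformations that genuinely fix $k$---for instance, detaching an entire subtree from an interior vertex $x_{i}$ of the diametral path and reattaching it unchanged at $x_{1}$ or $x_{d-1}$, and then arguing directly that eccentricities do not decrease. Your closing computation $\varepsilon(T_{n,k})=\varepsilon(T_{n,k}^{t})=\tfrac{(q+1)(3n-2)}{2}$ when $k\mid n-2$ is correct, but it is not reached by the transformation scheme as written.
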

Since $\varepsilon(P_n)$ is known, the value of $\varepsilon(T(l, m,d))$ can be easily calculated. We have
$$\varepsilon(T(l,k-l,n-k))=\left\lfloor\dfrac{3n^2-k^2-2nk+2(n+k)}{4}\right\rfloor.$$
The values of $\varepsilon(T_{n,k})$ and $\varepsilon(T_{n,k}^t)$ are given in \cite{Tz2}.

For $1\leq g \leq n-1$, the  graph obtained by joining one end vertex of the path $P_{n-g}$ with a vertex of $C_g$ by an edge is a unicyclic graph on $n$ vertices and we denote it by $U_{n,g}^l$. By $U_{n,g}^p$, we denote the unicyclic graph obtained by adding $n-g$ pendant vertices to a vertex of $C_g$ (see Figure \ref{ung}). Note that  $U_{n,n-1}^l\cong U_{n,n-1}^p$.
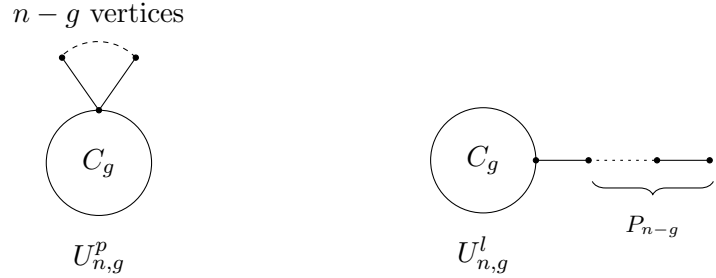
\begin{figure}[h!]
\begin{center}
\begin{tikzpicture}[scale =.7]
\draw (0,0) circle [radius=1 cm];
\filldraw (0,1) circle [radius= .5mm];
\filldraw (-.7,2) circle [radius= .5mm];
\filldraw (.7,2) circle [radius= .5mm];
\draw (-.7,2)--(0,1)--(.7,2);
 \draw [dash pattern=on 2pt off 2pt](.7,2) arc [ start angle=44, end angle =135, radius=1cm];
 \draw (0,2.8) node {$n-g$ vertices};
 \draw (0,0) node {$C_g$};
 \draw ( 0,-1.8) node {$U_{n,g}^p$} ;
\end{tikzpicture}
\hskip 3cm
\begin{tikzpicture}[scale=.7]
\draw (0,0) circle [radius= 1cm];
\filldraw (1,0) circle [radius=.5mm];
\filldraw (2,0) circle [radius= .5mm];
\filldraw (3.3,0) circle [radius=.5mm];
\filldraw (4.3,0) circle [radius= .5mm];
\draw [dash pattern=on 1pt off 2pt] (2,0)--(3.3,0);
\draw (1,0)--(2,0);
\draw(3.3,0)--(4.3,0);
\draw(0,0) node {$C_g$};
\draw [decorate, decoration={brace, amplitude=5pt, mirror}, xshift =2pt, yshift=0pt]
(2,-0.4) -- (4.3,-0.4) node [black, midway, yshift=-0.6cm]
{\footnotesize $P_{n-g}$};
\draw (0,-1.8) node {$U_{n,g}^l$} ;
\end{tikzpicture}
\caption{The graphs $U_{n,g}^p$ and $U_{n,g}^l$}\label{ung}
\end{center}
\end{figure}

In \cite{Tz1} and \cite{Yfw}, the authors have studied the total eccentricity index (in terms of average eccentricity) of unicyclic graphs and proved the following.
\begin{proposition}[\cite{Tz1}, Theorem 3.3 and \cite{Yfw}, Theorem 2.3]\label{Uc-max}
Let $H$ be a unicyclic graph on $n\geq 5$ vertices. Then
$$2n-1\leq \varepsilon(H)\leq \left\lfloor \dfrac{3n^2-4n-3}{4}\right\rfloor.$$ Furthermore, the left equality happens if and only if $H\cong U_{n,3}^p$ and the right equality happens if and only if $H\cong U_{n,3}^l$.
\end{proposition}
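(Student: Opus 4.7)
\emph{Proof plan for the lower bound.} I would argue by degree counting. Since $H$ is unicyclic on $n$ vertices, $|E(H)|=n$ and the degree sum equals $2n$. If two vertices $u$ and $v$ both had eccentricity $1$, each would have degree $n-1$, forcing the degree sum to be at least $(n-1)+(n-1)+(n-2)=3n-4$, which exceeds $2n$ for $n\geq 5$. Hence at most one vertex of $H$ has eccentricity $1$, so either every vertex has eccentricity $\geq 2$ (giving $\varepsilon(H)\geq 2n$), or exactly one vertex $v$ has eccentricity $1$ and $\varepsilon(H)\geq 1+2(n-1)=2n-1$. Equality in the latter case forces $\deg(v)=n-1$; this accounts for $n-1$ of the $n$ edges, and the single remaining edge must join two neighbours of $v$, producing a triangle through $v$ with the other $n-3$ neighbours being pendants at $v$. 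Hence $H\cong U_{n,3}^p$.

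\emph{Proof plan for the upper bound --- reduction to $U_{n,g}^l$.} The plan is to perform eccentricity-increasing perturbations until $H$ becomes $U_{n,g}^l$ for some $g$, then optimize over $g$. View $H$ as the cycle $C_g$ with rooted trees $T_0,\dots,T_{g-1}$ hanging at the cycle vertices $c_0,\dots,c_{g-1}$. Applying Lemma \ref{Grafting} iteratively within each $T_i$ straightens it into a pendant path, strictly increasing $\varepsilon$ at each non-trivial step. Next I would establish a cycle-aware analogue of Lemma \ref{Path-movement}: if two distinct cycle vertices $c_i$ and $c_j$ both carry pendant paths of positive length, then detaching the path at $c_j$ and reattaching it to the tip of the path at $c_i$ strictly increases $\varepsilon$. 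The argument parallels Lemma \ref{Path-movement}: intra-piece distances are preserved, inter-piece distances weakly grow, and the former endpoint of the path at $c_j$ acquires strictly larger eccentricity because its distance to its former cycle neighbourhood strictly increases after the move. Iterating reduces $H$ to $U_{n,g}^l$.

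\emph{Optimizing over $g$.} Label the cycle of $U_{n,g}^l$ by $c_0,\dots,c_{g-1}$ and the attached path by $w_1,\dots,w_{n-g}$ at $c_0$. A straightforward distance calculation gives
\[
e(w_j)=\max\!\bigl(j+\lfloor g/2\rfloor,\ n-g-j\bigr),\qquad e(c_i)=\max\!\bigl(\lfloor g/2\rfloor,\ \min(i,g-i)+n-g\bigr).
\]
Summing (splitting on the parity of $g$ and on which term wins the $\max$) produces an explicit closed form $f(n,g):=\varepsilon(U_{n,g}^l)$. I would then verify by direct term-by-term comparison that $f(n,g)$ is strictly decreasing in $g$ on $\{3,4,\dots,n\}$, so the unique maximizer is $g=3$, with $f(n,3)=\lfloor(3n^2-4n-3)/4\rfloor$.

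\emph{Main obstacle.} The technically delicate step is the cycle-aware variant of Lemma \ref{Path-movement}. In the original lemma, the connecting path lies outside any cycle, so the endpoint $u$ is a cut vertex and the identity $d_G(a,b)=d_G(a,u)+d_G(u,b)$ is automatic. In a unicyclic graph this identity can fail because the two arcs of the cycle offer competing routes between cycle vertices, so the distance comparisons must be replaced by a careful min-of-two-arcs bookkeeping. The qualitative conclusion --- weak growth of inter-piece distances, with strict growth at a suitably chosen eccentric vertex --- should still go through, but making it rigorous is the main technical content of the upper-bound proof.
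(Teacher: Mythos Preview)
The paper does not contain a proof of this proposition. Proposition~\ref{Uc-max} is quoted from Tang--Zhou \cite{Tz1} and Yu--Feng--Wang \cite{Yfw}; the only thing the present paper adds is the remark that the cited results (stated for $n\geq 6$) extend to $n=5$ by direct inspection. So there is no ``paper's own proof'' to compare your proposal against.

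That said, a few comments on your outline. Your lower-bound argument is clean and correct: the degree-sum obstruction rules out two universal vertices once $n\geq 5$, and the equality analysis pinning down $U_{n,3}^p$ is sound. For the upper bound, the overall strategy (straighten hanging trees via Lemma~\ref{Grafting}, then consolidate all pendant paths into a single one, then optimize over the girth) matches what is done in the cited references. You correctly identify the delicate point: the path-consolidation step is not an instance of Lemma~\ref{Path-movement} as stated, because the two attachment points $c_i,c_j$ lie on the cycle and neither is a cut vertex separating the two pendant paths, so the additivity $d(a,b)=d(a,u)+d(u,b)$ can fail. One standard way around this is not to mimic Lemma~\ref{Path-movement} directly but to argue that, with pendant paths of lengths $\ell_i$ and $\ell_j$ (say $\ell_i\geq \ell_j$), moving the path at $c_j$ to the tip of the path at $c_i$ can be realised as a sequence of single-edge grafts at a vertex whose two hanging ``arms'' have lengths satisfying the hypothesis of Lemma~\ref{Grafting} once you treat the rest of the graph as a base; the cycle then enters only through the base and the monotonicity goes through. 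Alternatively one checks the two-arc minimum explicitly, as you suggest. Either route works, but it does require a genuine argument beyond invoking Lemma~\ref{Path-movement}.

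Your final step, showing $\varepsilon(U_{n,g}^l)$ is maximised at $g=3$, is routine once the closed form is written down; this is essentially the content of \cite[Theorem~2.3]{Yfw}.
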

The authors have proved the Proposition \ref{Uc-max} for $n\geq 6$. A simple  calculation shows that the result is also true for $n=5$. So, we modified the statement in Proposition \ref{Uc-max} by taking $n\geq 5.$ 

For $0\leq k \leq n-3$, let $P_n^k$ be the graph in $\mathfrak{H}_{n,}k$  obtained by attaching $k$ pendant vertices at one vertex of $K_{n-k}.$ Clearly the complete graph $K_n$ uniquely minimizes the total eccentricity index over $\mathfrak{H}_{n,0}$.   For $1\leq k \leq n-3$, we have the following result due to Tang and West.
 
\begin{theorem}[\cite{Tw}, Theorem $3.4$]\label{min-pendant}
Let  $G\in \mathfrak{H}_{n,k}$, $n\geq 3$ and $1\leq k\leq n-3,$ . Then $\varepsilon(G)\geq 2n-1$ and the equality happens if  $G\cong P_n^k.$
\end{theorem}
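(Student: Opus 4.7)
The plan is to first check the value $\varepsilon(P_n^k)=2n-1$ by direct inspection, and then prove the matching lower bound by isolating a single structural fact: the presence of a pendant vertex forces $G$ to have at most one \emph{universal} vertex (a vertex of eccentricity $1$, equivalently of degree $n-1$). In $P_n^k$, the vertex $v$ at which the $k$ pendants are attached has degree $n-1$ and eccentricity $1$; every vertex in $K_{n-k}\setminus\{v\}$ reaches every other vertex of $K_{n-k}$ in one step and every pendant through $v$ in two steps, while each pendant reaches every other vertex through $v$ in at most two steps. Hence every vertex other than $v$ has eccentricity exactly~$2$, and $\varepsilon(P_n^k)=1+2(n-1)=2n-1$.

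For the lower bound, fix $G\in\mathfrak{H}_{n,k}$ with $k\geq 1$ and let $p$ be any pendant vertex of $G$ with unique neighbour $w$. Any vertex $u\neq w$ is non-adjacent to $p$, so $u$ has degree strictly less than $n-1$; thus $w$ is the only candidate for a universal vertex of $G$. I would then split into two cases. If $G$ has no universal vertex, every vertex has eccentricity at least $2$ and $\varepsilon(G)\geq 2n > 2n-1$. If $G$ has exactly one universal vertex (necessarily $w$), then $e(w)\geq 1$, and for every other vertex $u$ the failure of the universality property produces a vertex at distance at least $2$ from $u$, so $e(u)\geq 2$. Summing, $\varepsilon(G)\geq 1+2(n-1)=2n-1$, matching the value realised by $P_n^k$.

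No step in this scheme looks delicate; the only observation requiring a brief justification is the uniqueness of the universal vertex, which is the precise reason the bound is $2n-1$ rather than the naive $2n$ obtained by declaring every eccentricity to be at least $2$. Note that this argument identifies equality with the condition ``$G$ has exactly one universal vertex and diameter $\leq 2$'', which is only a sufficient condition for $G\cong P_n^k$; this matches the one-directional ``if'' wording of the theorem and is therefore enough.
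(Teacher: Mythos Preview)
Your proof is correct, but note that the paper does not supply its own proof of this statement: it is quoted verbatim as Theorem~3.4 of Tang and West~\cite{Tw} and left unproved. So there is no argument in the paper to compare against.

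On the substance: the computation $\varepsilon(P_n^k)=2n-1$ is right (the attachment vertex $v$ has degree $(n-k-1)+k=n-1$, hence eccentricity $1$, and every other vertex has eccentricity exactly $2$). The lower bound argument is also sound: the key observation that a pendant vertex $p$ with unique neighbour $w$ forces every $u\neq w$ to miss the edge $\{u,p\}$, hence $\deg(u)<n-1$, correctly pins down $w$ as the only possible universal vertex. The two-case split then gives $\varepsilon(G)\geq 2n$ or $\varepsilon(G)\geq 1+2(n-1)=2n-1$, respectively. Your closing remark is also accurate: the equality class is genuinely larger than $\{P_n^k\}$ (any graph with a universal vertex, exactly $k$ pendants, and diameter $2$ works), which is consistent with the one-directional ``if'' in the statement.
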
  
 We now discuss the maximization problem on the total eccentricity index over $\mathfrak{H}_{n,k}$. For that the following lemma is very useful.
\begin{lemma}\label{g-3change}
Let $H$ be a connected graph with at least two vertices and $u\in V(H)$. Let $G$ be the graph obtained  by joining an edge between $u$ and the pendant vertex of $U_{r,g}^l$ with $g\geq 4$. Let $G'$ be the graph obtained  by joining an edge between $u$ and  the pendant vertex of $U_{r,3}^l$.  Then $\varepsilon(G)<\varepsilon(G').$
\end{lemma}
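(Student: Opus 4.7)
Proof plan. The key observation is that $G$ and $G'$ arise from a common tree $T$ by adding a single extra edge. Label the ``path from $u$ to the far end of the cycle'' uniformly as $q_{0}=u,\,q_{1},\ldots,q_{r}$: in $G$ put $q_{i}=p_{i}$ for $1\le i\le r-g$ and $q_{r-g+1+k}=c_{k}$ for $0\le k\le g-1$, while in $G'$ put $q_{i}=p'_{i}$ for $1\le i\le r-3$ and $q_{r-2},q_{r-1},q_{r}=c'_{0},c'_{1},c'_{2}$. Removing the ``closing'' cycle edge from either graph gives the same tree $T$: namely $H$ with the path $q_{0}q_{1}\cdots q_{r}$ attached at $u$. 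Under this common identification
\[
G=T+\{q_{r-g+1}\,q_{r}\},\qquad G'=T+\{q_{r-2}\,q_{r}\},
\]
so the two graphs differ only in the tree-span of a single chord: $g-1$ in $G$, versus $2$ in $G'$. Intuitively, a longer chord provides more shortcuts, so $G$ should have the smaller eccentricities.

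The plan is to show $e_{G}(v)\le e_{G'}(v)$ for every $v$, with strict inequality at $v=q_{r}$; summing the inequalities then proves the lemma. Set $D=e_{H}(u)\ge 1$; since $r\ge g+1$ (the pendant vertex of $U_{r,g}^{l}$ exists), we have $D+r\ge g+2$. For path-side vertices ($v\in V(H)$ or $v=q_{i}$ with $0\le i\le r-g$), the cycle contribution to $e_{G}(v)$ equals $d_{T}(v,q_{r-g+1})+\lfloor g/2\rfloor$, while in $G'$ it equals $d_{T}(v,q_{r-2})+1=d_{T}(v,q_{r-g+1})+(g-2)$; since $g-2\ge\lfloor g/2\rfloor$ for $g\ge 4$ and the $H$-contribution is unchanged, the inequality is immediate. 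For cycle-side vertices $v=q_{\ell}$ with $r-g+1\le \ell\le r$, one case-splits on whether $\ell\le r-3$ (then $v$ is a path vertex of $G'$) or $\ell\in\{r-2,r-1,r\}$ (then $v$ is a cycle vertex of $G'$). Using the explicit formula $e_{G}(q_{r-g+1+k})=\max\bigl(D+r-g+1+\min(k,g-k),\,\lfloor g/2\rfloor\bigr)$ and its $G'$-analogue, the desired inequality follows in each sub-case after invoking $D+r\ge g+2$ to rule out pathological configurations.

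For the strict inequality take $v=q_{r}$. In $G$, the chord shortens $d_{G}(q_{r},u)$ to $r-g+2$, giving $e_{G}(q_{r})=\max\bigl(D+r-g+2,\,\lfloor g/2\rfloor\bigr)$; in $G'$, $d_{G'}(q_{r},u)=r-1$, so $e_{G'}(q_{r})=D+r-1$. For $g\ge 4$, $D+r-1>D+r-g+2$; and $D+r-1\ge g+1>\lfloor g/2\rfloor$. Hence $e_{G'}(q_{r})>e_{G}(q_{r})$ by at least $\min(g-3,\,D+r-1-\lfloor g/2\rfloor)\ge 1$. Summing the pointwise inequalities therefore yields $\varepsilon(G')-\varepsilon(G)\ge e_{G'}(q_{r})-e_{G}(q_{r})>0$.

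The main obstacle is the case analysis for $v=q_{\ell}$ in the overlap range $r-g+1\le \ell\le r-3$, where $v$ plays different structural roles in $G$ and $G'$: in $G$ it is a cycle vertex (so its eccentricity involves the cycle diameter $\lfloor g/2\rfloor$), while in $G'$ it is a path vertex (so its eccentricity is $\max(\ell+D,\,r-1-\ell)$). The subtle sub-case is $k>g/2$ (where $\min(k,g-k)=g-k$), for which the first terms of the two maxima no longer coincide; there the bound $D+r\ge g+2$ is precisely what forces the $G'$ first term $D+r-g+1+k$ to dominate the $G$ expression $\max(D+r-k+1,\,\lfloor g/2\rfloor)$.
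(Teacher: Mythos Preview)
Your proposal is correct and follows essentially the same approach as the paper: identify $V(G)=V(G')$ via a common vertex labeling, show $e_{G}(v)\le e_{G'}(v)$ for every $v$, and exhibit strict inequality at the ``far'' cycle vertex $q_r$ (the paper's $v_r$). Your framing via the common spanning tree $T$ with $G=T+\{q_{r-g+1}q_r\}$ and $G'=T+\{q_{r-2}q_r\}$ is a clean way to set up the identification, but the substance of the argument---the case split on the position of $v$ and the explicit eccentricity comparisons---matches the paper's Cases I--III. One minor difference: for cycle vertices $v_i$ with $r-g+1\le i\le r-1$ whose eccentric vertex lies on the cycle, the paper bypasses your explicit calculation by noting simply that $e_G(v_i)=\lfloor g/2\rfloor$ while in $G'$ the vertex $v_i$ lies on a path of length at least $g$, forcing $e_{G'}(v_i)\ge\lceil g/2\rceil\ge\lfloor g/2\rfloor$; this is a touch slicker than your computation but equivalent in content.
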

\begin{proof}
Let $V(U_{r,g}^l)=\{v_1,v_2,\ldots,v_r\}$ where $v_1v_2\cdots v_{r-g}$ is the path $P_{r-g}$, $v_{r-g+1}v_{r-g+2}\cdots v_{r}v_{r-g+1}$ is the cycle $C_g$ and $v_{r-g}$ is adjacent to $v_{r-g+1}.$  We form $G'$ from $G$ by naming the vertices of $U_{r,3}^l$ as following: $v_1v_2\cdots v_{r-3}$ is the path $P_{r-3}$, $v_{r-2}v_{r-1}v_{r}v_{r-2}$ is the cycle $C_3$ and $v_{r-3}$ is adjacent to $v_{r-2}$. Note that $V(G)=V(G').$ We will prove that for any $v\in V(G)=V(G')$, $e_{G}(v)\leq e_{G'}(v)$ and strict inequality occurs for at least one vertex.

Suppose $v\in V(G)=V(G')$. Let $v'$ be an eccentric vertex of $v$ in $G$. Following are the three cases.

\noindent{\bf Case-I:} $v\in V(H)$.\\
If $v'\in V(H)$, then $e_G(v)=d_G(v,v')=d_{G'}(v,v')\leq e_{G'}(v)$. If $v'\notin  V(H)$, then $v_{r-1}$ is an eccentric vertex of $v$ in $G'$. So, $e_G(v)=d_G(v,v')=d_G(v,u)+d_G(u,v')\leq d_{G'}(v,u)+r-1=e_{G'}(v).$

\noindent{\bf Case-II:} $v\in \{v_1,v_2,\cdots,v_{r-1}\}$.\\
We have $d_{G}(v_i,u)\leq d_{G'}(v_i,u)$ for $1\leq i\leq r-1$. If $v'\in V(H)$, then $e_G(v)=d_G(v,v')=d_G(v,u)+d_G(u,v')\leq d_{G'}(v,u)+d_{G'}(u,v')=d_{G'}(v,v')\leq e_{G'}(v)$. If $v'\notin  V(H)$ then $v'$ is in the cycle $C_g$. So, 
$$e_G(v_i)=\begin{cases} 
r-g-i+1+\lfloor\frac{g}{2}\rfloor &\mbox{if}\;\; 1\leq i\leq r-g,\\
\lfloor\frac{g}{2}\rfloor & \mbox{if} \;\; r-g+1\leq i\leq r-1. 
\end{cases}$$
Thus, for $1\leq i\leq r-g$, $v_{r-1}$ is an eccentric vertex of $v_i$ in $G'$. So, we have $e_G(v_i)= r-g-i+1+\lfloor\frac{g}{2}\rfloor= r-i-(g-\lfloor\frac{g}{2}\rfloor-1)\leq r-i-1=e_{G'}(v_i)$. The last inequality holds since $g\geq 4$. For $r-g+1\leq i\leq r-1$, $e_G(v_i)=\lfloor\frac{g}{2}\rfloor\leq e_{G'}(v_i)$, since $v_i$ lies on a path in $G'$ of length at least $g$.

\noindent{\bf Case-III:} $v=v_r.$\\
Let $z$ be a vertex of $H$ farthest from $u$. In this case, $e_{G}(v_r)=\max\{r-g+2+d(u,z), \lfloor\frac{g}{2}\rfloor \}$. Since  $g\geq 4$ so $r-1>\max\{r-g+2,\lfloor\frac{g}{2}\rfloor\}$ and hence  $e_{G}(v_r)<r-1+d(u,z)=e_{G'}(v_r).$ 

Therefore, $\varepsilon(G)<\varepsilon(G')$ and this completes the proof.                                                                                                                                                                                                                                      
\end{proof}

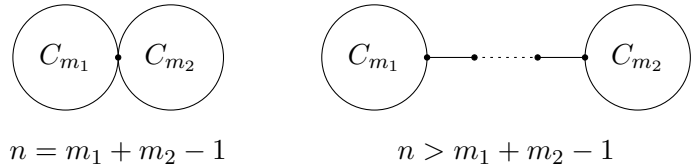
\begin{figure}[h!]
\begin{center}
\begin{tikzpicture}[scale =.7]
\draw (0,0)circle [radius= 1cm];
\draw (2,0) circle [radius= 1cm];
\draw  (1,-1.8) node {$n=m_1+m_2-1$};
\draw (0,0) node {$C_{m_1}$};
\draw (2,0) node {$C_{m_2}$};
\filldraw(1,0) circle [radius=.5mm];
\end{tikzpicture}
\hskip 1cm
\begin{tikzpicture}[scale=.7]
\draw (0,0)circle [radius= 1cm];
\draw (5,0) circle [radius= 1cm];
\filldraw (1,0)circle [radius= .5 mm];
\filldraw (1.9,0) circle [radius= .5 mm];
\filldraw (3.1,0)circle [radius= .5 mm];
\filldraw (4,0) circle [radius= .5 mm];
\draw [dash pattern= on 1pt off 2pt] (1.9,0)--(3.1,0);
\draw (1,0)--(1.9,0);
\draw(3.1,0)--(4,0);
\draw  (2.5,-1.8) node {$n>m_1+m_2-1$};
\draw (0,0) node {$C_{m_1}$};
\draw (5,0) node {$C_{m_2}$};
\end{tikzpicture}
\caption{The graphs $C_{m_1,m_2}^n$}\label{cmn}
\end{center}
\end{figure}
We define a specific subclass of graphs in $ \mathfrak{H}_{n,0} $ as follows. Let $m_1,m_2$ and  $n$ be positive integers with $m_1,m_2\geq 3$ and $n \geq m_1+m_2-1.$ If $n > m_1+m_2-1,$ take a path on $n-(m_1+m_2)+2$ vertices and identify one pendant vertex of the path with a vertex of $C_{m_1}$ and another pendant vertex with a vertex of $C_{m_2}.$ If $n= m_1+m_2-1$, then identify one vertex of $C_{m_1}$ with a vertex of $C_{m_2}.$ We denote this graph by $C_{m_1,m_2}^n$ (see Figure \ref{cmn}). 
  
\begin{lemma}\label{Compare}
For $n\geq 7$, $\varepsilon(C_{3,3}^n)>\varepsilon(C_n)$.
\end{lemma}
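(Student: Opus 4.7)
The plan is a direct comparison by explicit computation of both total eccentricities, since no earlier lemma in the paper relates $C_n$ and $C_{3,3}^n$ by a single edge move or by grafting. The cycle is easy: by vertex-transitivity every vertex of $C_n$ has eccentricity $\lfloor n/2\rfloor$, so $\varepsilon(C_n)=n\lfloor n/2\rfloor$.

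For $C_{3,3}^n$ I would fix the obvious labelling of the vertex set as $\{u_1,u_2\}\cup\{p_1,\ldots,p_{n-4}\}\cup\{v_1,v_2\}$, with triangles on $\{u_1,u_2,p_1\}$ and $\{v_1,v_2,p_{n-4}\}$ joined by the path $p_1p_2\cdots p_{n-4}$. The distances are read off directly: $d(p_i,u_j)=i$, $d(p_i,v_j)=n-3-i$, and $d(u_j,v_k)=n-3$, so each of the four triangle vertices has eccentricity $n-3$, while each path vertex satisfies $e(p_i)=\max(i,n-3-i)$. Summing yields
\[
\varepsilon(C_{3,3}^n)=4(n-3)+\sum_{i=1}^{n-4}\max(i,n-3-i),
\]
and the max-sum already has order $\frac{3}{4}n^2$ against the order $\frac{1}{2}n^2$ of $\varepsilon(C_n)$, which suggests the inequality for large $n$.

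The final step is to make this precise for every $n\ge 7$. I would split the max-sum at its midpoint and treat the two parities separately: for $n$ even the sum evaluates to $(n-4)(3n-10)/4$, and for $n$ odd to $(n-3)(3n-13)/4$. Subtracting $\varepsilon(C_n)=n\lfloor n/2\rfloor$ then gives the differences $(n^2-6n-8)/4$ in the even case and $(n^2-4n-9)/4$ in the odd case, each positive for $n\ge 7$ by an elementary quadratic check (the tight cases being $n=8$ and $n=7$ respectively). The only nontrivial aspect is the parity bookkeeping in the max-sum and in $\lfloor n/2\rfloor$; beyond that the proof reduces to routine algebra.
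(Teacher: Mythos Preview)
Your proof is correct and follows essentially the same approach as the paper: direct computation of both $\varepsilon(C_n)$ and $\varepsilon(C_{3,3}^n)$, split by parity, followed by subtraction and an elementary quadratic check. The only difference is cosmetic: the paper observes that $\varepsilon(C_{3,3}^n)=\varepsilon(P_{n-2})+2(n-3)$ (your path $u_1p_1\cdots p_{n-4}v_1$ is a copy of $P_{n-2}$ whose vertex eccentricities coincide with those in $C_{3,3}^n$, and the two remaining triangle vertices each contribute $n-3$), so it can quote the known formula for $\varepsilon(P_{n-2})$ instead of evaluating your max-sum from scratch; the resulting parity-case formulas and differences agree with yours.
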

\begin{proof}
We have $$\varepsilon(P_n)=\begin{cases}
\frac{3n^2-2n}{4} &\mbox{if $n$ is even},\\
\frac{3n^2-2n-1}{4} &\mbox{if $n$ is odd}. 
\end{cases}$$
and it is easy to check that 
  $$\varepsilon(C_n)=\begin{cases}
\frac{n^2}{2} &\mbox{if $n$ is even},\\
\frac{n^2-n}{2} &\mbox{if $n$ is odd}. 
\end{cases}$$
Also  $\varepsilon(C_{3,3}^n)=\varepsilon(P_{n-2})+2(n-3)$. So,
$$\varepsilon(C_{3,3}^n)=
\begin{cases}
\frac{3}{4}n^2-\frac{3}{2}n-2 &\mbox{if $n$ is even},\\
\frac{3}{4}n^2-\frac{3}{2}n-\frac{9}{4} &\mbox{if $n$ is odd}. 
\end{cases}$$
Hence 
\begin{align*}\varepsilon(C_{3,3}^n)-\varepsilon(C_n)&=
\begin{cases}
\frac{n^2}{4}-\frac{3}{2}n-2 \; \mbox{if $n$ is even},\\
\frac{n^2}{4}-n-\frac{9}{4} \;  \mbox{if $n$ is odd}, 
\end{cases}\\
&>0, \; \mbox{for}\;  n\geq 7.
\end{align*}
\end{proof}

\begin{lemma}\label{Compare1}
Let $m_1,m_2\geq 3$ and $n=m_1+m_2-1$. For $n\geq 7$, $ \varepsilon(C_{3,3}^n)>\varepsilon(C_{m_1,m_2}^n)$.
\end{lemma}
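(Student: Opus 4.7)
The plan is to compute $\varepsilon(C_{m_1,m_2}^n)$ explicitly when $n = m_1 + m_2 - 1$ and compare it, case by parity, with the closed formulas for $\varepsilon(C_{3,3}^n)$ already obtained inside the proof of Lemma \ref{Compare}. Without loss of generality I assume $m_1 \leq m_2$ and write $a = \lfloor m_1/2 \rfloor$, $b = \lfloor m_2/2 \rfloor$, so $a \leq b$. Denote the shared vertex by $v$, and label $V(C_{m_1})\setminus\{v\} = \{v_1,\ldots,v_{m_1-1}\}$ and $V(C_{m_2})\setminus\{v\} = \{w_1,\ldots,w_{m_2-1}\}$ consecutively along each cycle, so that $d(v_i,v) = \min(i,m_1-i)$ and $d(w_j,v) = \min(j,m_2-j)$.

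Since every path joining the two cycles passes through $v$, a short check gives $e(v) = b$, $e(v_i) = d(v_i,v) + b$ (using $a \leq b$), and $e(w_j) = \max\{b,\,d(w_j,v) + a\}$. Summing, using $\sum_{i=1}^{m-1}\min(i,m-i) = \lfloor m^2/4\rfloor$, and computing $\sum_{w_j\neq v}\max\{b,d(w_j,v)+a\}$ by splitting on whether $d(w_j,v) \leq b-a$ (and treating the parities of $m_2$ separately to handle the unique vertex at distance $b$ when $m_2$ is even), I will obtain the closed form
\[
\varepsilon(C_{m_1,m_2}^n) \;=\; m_1 b + \lfloor m_1^2/4\rfloor + 2b^2 + a^2 + \begin{cases} a & \text{if } m_2 \text{ is odd},\\ -b & \text{if } m_2 \text{ is even}. \end{cases}
\]

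With this formula in hand I split into three cases based on the parities of $(m_1,m_2)$, which in turn fixes the parity of $n = m_1+m_2-1$ and so selects the right closed form for $\varepsilon(C_{3,3}^n)$ from Lemma \ref{Compare}. Writing $m_1, m_2$ in terms of $a, b$ in each case (e.g.\ $m_1=2a+1$, $m_2=2b+1$ in the both-odd case), the difference $\varepsilon(C_{3,3}^n) - \varepsilon(C_{m_1,m_2}^n)$ reduces to a quadratic of the shape $a^2 + 4ab + b^2 + (\text{linear in } a,b) + (\text{constant})$. Since the partial derivatives in $a$ and in $b$ are both positive on $a,b \geq 1$, positivity of the quadratic reduces to evaluating it at the smallest admissible $(a,b)$ in each case.

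The main obstacle is the bookkeeping in this case analysis, and in particular the sharpness at the boundary. In the both-odd case the difference becomes $a^2+4ab+b^2-2a-b-3$, which vanishes exactly at $(a,b)=(1,1)$; this corresponds to $C_{3,3}^5$ (where $n=5$, not $\geq 7$), so the hypothesis $n\geq 7$ is precisely what rules out this equality. Strict inequalities at the smallest admissible $(a,b)$ (namely $(2,2)$ in the both-even case and $(1,3)$ or $(2,2)$ in the mixed-parity case) handle the other two cases.
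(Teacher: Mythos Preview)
Your approach is correct and genuinely different from the paper's. The paper argues structurally: assuming $m_1\le m_2$, it deletes an edge of $C_{m_2}$ (Lemma~\ref{edge}), applies grafting (Lemma~\ref{Grafting}) to straighten the two resulting paths into lengths $1$ and $m_2-2$, and then re-attaches the short path as a triangle at the far end, producing $C_{m_1,3}^n$; a final appeal to Lemma~\ref{g-3change} (when $m_1\ge 4$) finishes. No explicit formula for $\varepsilon(C_{m_1,m_2}^n)$ is used at this point.

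Your route is purely computational: you derive the closed form $\varepsilon(C_{m_1,m_2}^n)=m_1b+\lfloor m_1^2/4\rfloor+2b^2+a^2+(a\text{ or }{-b})$ and reduce the claim to positivity of an explicit quadratic in $(a,b)$, checked at the minimal admissible corners. This is self-contained (it does not invoke Lemmas~\ref{edge}, \ref{Grafting}, or \ref{g-3change}) and in fact recovers, in compact form, the four-case formula the paper only writes down later in Section~4. Two small points worth tightening in a full write-up: first, the ``three cases'' really comprise four parity patterns, and it is a (true) coincidence that the two mixed-parity sub-cases yield the same expression $2a^2+2b^2+2ab+a$; second, the blanket claim that both partials are positive on $a,b\ge 1$ fails marginally in the both-even case (where $\partial_a=2a+4b-6$), but the actual constraints there force $a,b\ge 2$, so monotonicity on the admissible region is fine. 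The boundary analysis you sketch (vanishing only at $(a,b)=(1,1)$, i.e.\ $n=5$) is exactly the reason the hypothesis $n\ge 7$ is needed.
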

\begin{proof}
Without loss of generality assume that $m_1\leq m_2.$ As $n\geq 7,$ so $m_2\geq 4.$ If $m_2=4$, then $C_{m_1,m_2}^n=C_{4,4}^7$ and  $\varepsilon(C_{4,4}^7)= 22< 24= \varepsilon(C_{3,3}^n)$. 

Suppose $n\geq 8.$ Then $m_2\geq 5.$ Let $C_{m_2}:v_1v_2\cdots v_{m_2}v_1$  be the subgraph of $C_{m_1,m_2}^n$ with $\deg(v_1)=4.$  Delete the edge $\{v_{\lceil \frac{m_2}{2}\rceil},v_{\lceil \frac{m_2}{2}\rceil+1} \}$ from $C_{m_1,m_2}^n$ to get a new graph $G$. By Lemma \ref{edge}, $\varepsilon(C_{m_1,m_2}^n)\leq \varepsilon(G).$ Note that in $G$ there are two paths ($v_1,v_2,\ldots, v_{\lceil \frac{m_2}{2}\rceil} $ and $v_1,v_{m_2}, \ldots v_{\lceil \frac{m_2}{2}\rceil+1}$) attached at $v_1$ each of length at least $2.$ By grafting of edges operation we can get a new graph $G'$ from $G$ where $G'$ is the graph in which two paths, one of length $1$ and other of length $m_2-2$ are attached at a vertex $v_1$ of $C_{m_1}.$  Then by Lemma \ref{Grafting}, $\varepsilon(G)<\varepsilon(G').$ 

Let $P:v_1v_2$ and $P':v_1v_{m_2}v_{m_2-1}\cdots v_3$ be the paths attached at $v_1$ in $G'$. Observe that if $v_2$ is an eccentric vertex of some $x\in V(G')$ then $m_1=3$ and the two vertices of $C_3$ other than $v_1$ are also the eccentric vertices of $x.$ So, we will not consider $v_2$ as an eccentric vertex of any vertex of $G'$. Construct a new graph $G''$ from $G'$ by deleting the edge $\{v_1,v_2\}$ and adding the edges $\{v_2,v_4\}$ and $\{v_2,v_3\}$. 

For any $x\in V(G')\setminus \{v_2\}$, $e_{G'}(x)=e_{G''}(x)$. Let $w\in V(C_{m_1})$ be a vertex farthest from $v_1$. Then $e_{G''}(v_2)=d_{G''}(v_2,v_1)+d_{G''}(v_1,w)=m_2-2+d_{G''}(v_1,w)= m_2-2+d_{G'}(v_1,w)\geq \max\{m_2-1,1+d_{G'}(v_1,w)\}=e_{G'}(v_2)$ and hence $\varepsilon(C_{m_1,m_2}^n)<\varepsilon(G')\leq \varepsilon(G'')$. Note that $G''\cong C_{m_1,3}^n$. If $m_1\geq 4$, then  the result follows from Lemma \ref{g-3change}.
\end{proof}

We will now prove the main results of this section.  
\begin{theorem}
Let $G\in \mathfrak{H}_{n,k}$ and $0\leq k \leq n-3$. Then
\begin{itemize}
\item[$(i)$] for $2\leq k \leq n-3$,  $\varepsilon(G)\leq \left\lfloor\dfrac{3n^2-k^2-2nk+2(n+k)}{4}\right\rfloor$ and equality is attained by the trees $T(l,k-l,n-k)$ for any $1\leq l\leq k-1$.
\item[$(ii)$] for $k=1$, $\varepsilon(G)\leq \left\lfloor \frac{3n^2-4n-3}{4} \right\rfloor $ and equality holds if and only if $G\cong U_{n,3}^l.$
\item[$(iii)$] For $n\geq 7$ and $k=0,$ $\varepsilon(G) \leq \left\lfloor \frac{3n^2-6n-8}{4} \right\rfloor$ and equality holds if and only if $G\cong C_{3,3}^n.$
\end{itemize}
\end{theorem}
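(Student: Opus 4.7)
For Part (i), I would reduce to trees via a spanning-tree argument combined with monotonicity in $k$ of the bound in Proposition \ref{k-maxtree}. Pick any spanning tree $T$ of $G\in\mathfrak{H}_{n,k}$; since the unique edge at each pendant of $G$ is forced to lie in $T$, the $k$ pendants of $G$ remain pendants in $T$, so $T$ has some $k'\ge k$ pendants. Iterating Lemma \ref{edge} on the edges of $E(G)\setminus E(T)$ gives $\varepsilon(G)\le\varepsilon(T)$, and Proposition \ref{k-maxtree} yields $\varepsilon(T)\le\varepsilon(T(1,k'-1,n-k'))$. Writing the displayed upper-bound formula as $f(k)=(3n^2-k^2-2nk+2n+2k)/4$, a direct computation gives $f(k+1)-f(k)=-(2n+2k-1)/4<0$, so $f$ is strictly decreasing in $k$; hence $\lfloor f(k')\rfloor\le\lfloor f(k)\rfloor$, giving the claimed bound.

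For Part (ii), the spanning-tree bound is too weak (it would yield $\varepsilon(P_n)>\varepsilon(U_{n,3}^l)$), so I would induct on $|E(G)|$. The base case is $G$ unicyclic, handled directly by Proposition \ref{Uc-max}. For the inductive step, $G\in\mathfrak{H}_{n,1}$ has cyclomatic number at least $2$. First I would apply Lemma \ref{Block-transform} to any pendant block of $G$ with $\ge 3$ vertices, turning it into a cycle (this preserves the pendant count). Then pick a cycle edge $e$ and use Lemma \ref{edge} to obtain $\varepsilon(G)\le\varepsilon(G-e)$. If $e$ can be chosen with both endpoints of degree $\ge 3$, then $G-e\in\mathfrak{H}_{n,1}$ and induction closes the step. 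Otherwise, choose $e$ (if necessary together with further auxiliary edges) so that the resulting graph lies in $\mathfrak{H}_{n,k'}$ with $k'\ge 3$; then Part (i) together with the monotonicity of its bound gives $\varepsilon(G-e)\le\varepsilon(T(1,2,n-3))=\lfloor(3n^2-4n-3)/4\rfloor=\varepsilon(U_{n,3}^l)$, the last equality being a direct computation that makes the argument tight.

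For Part (iii), with $G\in\mathfrak{H}_{n,0}$ and $n\ge 7$, the extremal graph $C_{3,3}^n$ is cued by Lemmas \ref{Compare}, \ref{Compare1}, and \ref{g-3change}. If $G$ is $2$-connected, Lemma \ref{Block-transform} replaces it with $C_n$ and Lemma \ref{Compare} finishes the case. Otherwise, Lemma \ref{Block-transform} applied iteratively to the pendant blocks of $G$ (all of which have $\ge 3$ vertices since $G$ has no pendants) converts each into a cycle. Next, the remaining internal structure is simplified by iterated use of Lemma \ref{edge} applied only to cycle edges whose endpoints both have degree $\ge 3$ (so no new pendants are created), reducing $G$ to the form $C_{m_1,m_2}^n$ with $m_1,m_2\ge 3$. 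Finally, Lemma \ref{Compare1} handles the boundary $n=m_1+m_2-1$ and Lemma \ref{g-3change} reduces each pendant cycle of size $\ge 4$ down to $C_3$; together these give $\varepsilon(G)\le\varepsilon(C_{3,3}^n)$, with equality iff $G\cong C_{3,3}^n$.

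The main obstacle throughout is the edge-removal step: Lemma \ref{edge} gives tight control on $\varepsilon$ but says nothing about pendant counts. Case (ii) is especially subtle because the adjacent tree class $\mathfrak{T}_{n,2}$ has $P_n$ as its maximizer with $\varepsilon(P_n)>\varepsilon(U_{n,3}^l)$; the induction succeeds only by ``deflecting'' to $k'\ge 3$ pendants, which works thanks to the numerical coincidence $\varepsilon(T(1,2,n-3))=\varepsilon(U_{n,3}^l)$. In case (iii), the analogous technical point is verifying that the internal reduction to $C_{m_1,m_2}^n$ can always be carried out while preserving zero pendants; this relies on a careful, degree-based selection of which edges to delete at each step, and is what I expect to require the most detailed case analysis.
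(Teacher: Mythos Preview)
Your Part (i) is correct and essentially parallel to the paper's: the paper also passes to a spanning tree, but then uses Lemma~\ref{Grafting} to \emph{reduce} the pendant count back to $k$ before invoking Proposition~\ref{k-maxtree}, whereas you instead invoke Proposition~\ref{k-maxtree} at $k'\ge k$ and then use monotonicity of the bound. Both work.

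For Part (ii) your induction is unnecessarily delicate, and the ``otherwise'' branch is not fully justified (you have not shown that one can always reach $k'\ge 3$ rather than getting stuck at $k'=2$). The paper's argument is much shorter and sidesteps the issue entirely: since $G$ has a pendant it contains a cycle; simply delete edges from all cycles but one so that the result $G'$ is connected and unicyclic. Proposition~\ref{Uc-max} applies to \emph{every} unicyclic graph on $n$ vertices, irrespective of how many pendants $G'$ has, so $\varepsilon(G)\le\varepsilon(G')\le\varepsilon(U_{n,3}^l)$ immediately. You never need to control the pendant count of the intermediate graph.

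Part (iii) has a genuine gap. Your plan is to reduce to $C_{m_1,m_2}^n$ using only Lemma~\ref{edge} on cycle edges whose two endpoints have degree $\ge 3$, so that no pendants are created. This cannot always be done. Take $n=7$ and let $G$ be three triangles sharing a single vertex $w$. Every block is already a cycle, so Lemma~\ref{Block-transform} changes nothing; but every edge of $G$ has at least one endpoint of degree $2$, so there is no edge you are allowed to delete, and you are stuck well short of $C_{m_1,m_2}^7$. More generally, whenever all blocks meet at a single cut vertex your degree criterion fails. The paper handles exactly this obstruction (the case $d(H_1,H_2)=0$) by a different mechanism: after turning blocks into cycles it \emph{does} delete edges that create pendants, then uses Lemma~\ref{Grafting} to merge the resulting pendant paths into one, and finally invokes Lemma~\ref{Path-movement} to slide one of the two remaining cycles to the far end of that path, producing $C_{m_1,m_2}^n$. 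The case $d(H_1,H_2)\ge 1$ is treated similarly: edges are deleted from internal blocks (again possibly creating pendants), the resulting attached trees are grafted into paths, and Lemma~\ref{Path-movement} absorbs those paths into the central $C_{m_1}$--$C_{m_2}$ path. In short, Lemmas~\ref{Grafting} and~\ref{Path-movement} are the tools that let the paper temporarily leave $\mathfrak{H}_{n,0}$ and then return; your edge-deletion-only scheme has no substitute for them.
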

\begin{proof}
\begin{itemize}
\item[$(i)$] Suppose $G\ncong T(l,k-l,n-k)$ for any $1\leq l\leq k-1$.  If $G$ is not a tree, construct a spanning tree $G'$ from $G$ by deleting some edges. If $G$ is a tree, then take $G'$ same as $G$. By Lemma \ref{edge}, $\varepsilon(G)\leq \varepsilon(G').$ While constructing $G'$ from $G$, the number of pendant vertices may increase in $G'$. Suppose $G'$ has more than $k$ pendant vertices. Since $k\geq 2$, $G'$ has at least one vertex of degree greater than $2$ and at least two paths attached to it. Consider a vertex $v$ of $G'$ with $deg(v)\geq 3$ and two paths $P_{l_1},P_{l_2},\; l_1\geq l_2$ attached at $v.$ Using grafting of edges operation on $G'$, we get a new tree $\tilde{G}$ with number of pendant vertices one less than the number of pendant vertices of $G'$ and by Lemma \ref{Grafting}, $\varepsilon(G')<\varepsilon(\tilde{G}).$ Continue this process till we get a tree with $k$ pendant vertices from $\tilde{G}.$ By Lemma \ref{Grafting}, in every step of this process the total eccentricity index will increase. So, we will reach at a tree  $T$ of order $n$ with $k$ pendant vertices. If $G'$ has $k$ pendant vertices then take $T$ as  $G'.$ Thus $\varepsilon(G')\leq \varepsilon(T)$. By Proposition \ref{k-maxtree}, $\varepsilon(T)\leq \varepsilon(T(l,k-l,n-k))$ for any $1\leq l \leq k-1$. Now the result follows as $\varepsilon(T(l,k-l,n-k))=\left\lfloor\dfrac{3n^2-k^2- 2nk+2(n+k)}{4}\right\rfloor$ for any $1\leq l\leq k-1$.

\item[$(ii)$] Suppose $G$ is not isomorphic to $U_{n,3}^l.$ Since $G$ is connected and has exactly one pendent vertex, it must contain a cycle. Let $C_g$ be a cycle in $G.$ If $G$ is a unicyclic graph then by Proposition \ref{Uc-max}, $\varepsilon(G)\leq \lfloor \frac{3n^2-4n-3}{4} \rfloor $ with equality if and only if $G=U_{n,3}^l$. If $G$ has more than one cycle, then construct a new graph $G'$ from $G$ by deleting edges from all cycles other than $C_g$, so that the graph remains connected and $G'\ncong U_{n,3}^l.$ Then, by Lemma \ref{edge}, $\varepsilon(G)\leq \varepsilon(G')$ and $G'$ is a unicyclic graph on $n$ vertices with girth $g.$ By Proposition \ref{Uc-max}, $\varepsilon(G)\leq \varepsilon(G')\leq \lfloor \frac{3n^2-4n-3}{4} \rfloor $ and equality holds if and only if $G\cong U_{n,3}^l$.  

\item[$(iii)$] Let $G\ncong C_{3,3}^n$. First suppose $G$ has no cut vertex. Then $G$ has exactly one block, which is $G$ itself. By Lemma \ref{Block-transform}, $\varepsilon(G)\leq \varepsilon(C_n)$ and the result follows from Lemma \ref{Compare}.

Now suppose $G$ has at least one cut vertex. Then $G$ has at least two blocks. Let $H_1$ and $H_2$ be two blocks such that distance between them is maximum among all pair of blocks in $G.$ Suppose $|V(H_1)|=n_1$ and $|V(H_2)|=n_2$.

\noindent First suppose $d(H_1,H_2)=0.$ Then there is exactly one cut vertex $w$ in $G$ and every block is a pendant block with at least $3$ vertices. Replace each block by a cycle on same number of vertices to get a new graph $G'$. Then by Lemma \ref{Block-transform}, $\varepsilon(G)\leq \varepsilon(G')$. If there are exactly two cycles in $G'$, then the result follows by Lemma \ref{Compare1}. If there are more than two cycles in $G',$ then keep two cycles say $C_{n_1}$ and $C_{n_2}$ unchanged and from all other cycles delete an edge with an end point $w$ to get a new graph $G''$. Clearly $\varepsilon(G'')\geq \varepsilon(G)$ but number of pendant vertices in $G''$ is more than that of $G$. If there are more than one path attached at $w$ in $G''$, then sequentially apply grafting an edge operation to $G''$ to obtain a new graph $\tilde{G}$ such that $\tilde{G}$ has exactly one path attached at $w$, otherwise take $\tilde{G}$ as $G''$. By Lemma \ref{Grafting}, $\varepsilon(\tilde{G})\geq\varepsilon(G'')$. Note that $\tilde{G}$ is the graph having one pendant vertex, obtained by identifying a vertex $x$ of $C_{n_1}$, a vertex $y$ of $C_{n_2}$ and a pendant vertex $z$ of the path $P_{n+2-n_1-n_2}.$ Let $v$ be the pendant vertex of $\tilde{G}$. Construct a new graph $\bar{G}$ from $\tilde{G}$ by identifying $x$ with $v$ and $y$ with $z$. Then $\bar{G}$ has zero pendant vertex and by Lemma \ref{Path-movement}, $\varepsilon(\bar{G})>\varepsilon(\tilde{G})$. Now the result follows from Lemma $\ref{g-3change}.$

\noindent Now suppose $d(H_1,H_2)\geq 1$. Replace the blocks (if required) $H_1$ and $H_2$ by two cycles $C_{n_1}$ and $C_{n_2}$ respectively to form a new graph $G'$ from $G$. Clearly $G'\in \mathfrak{H}_{n,0}$ and by Lemma \ref{Block-transform}, $\varepsilon(G')\geq \varepsilon(G).$ If $G'$ is isomorphic to $C_{n_1,n_2}^n$ then the result follows from Lemma \ref{g-3change}. Otherwise, there must be some blocks in $G'$ other than $C_{n_1}$ and $C_{n_2}$ with at least three vertices. Remove edges say $\{e'_1,\ldots, e'_p\}$ from all such blocks  to form a new graph $G''$ such that $G''$ is connected and there are no cycles other than  $C_{n_1}$ and $C_{n_2}$. We can choose the edges $\{e'_1,\ldots, e'_p\}$ such that  $G''$ is not isomorphic to $C_{n_1,n_2}^n$. Then by Lemma \ref{edge}, $\varepsilon(G'')\geq \varepsilon(G').$

Let $P: v_1v_2\ldots v_k$ be the path in $G''$ joining $C_{n_1}$ and $C_{n_2}$ where $v_1\in V(C_{n_1})$ and $v_k\in V(C_{n_2})$. Let $v_0$ and $v_{0}'$ be the two vertices on $C_{n_1}$ adjacent with $v_1$, and let $v_{k+1}$ and $v_{k+1}'$ be the two vertices on $C_{n_2}$ adjacent with $v_k$. Consider the edges $e_i=\{v_i,v_{i+1}\}$ for $i=1,2,\ldots,k-1$,$e_0=\{v_1,v_0\}$, $e_0'=\{v_1v_0'\}\}$,$e_{k}=\{v_k,v_{k+1}\}$ and $e_{k}'=\{v_k,v_{k+1}'\}$  in $G''$.

Since $G''\ncong C_{n_1,n_2}^n$ and contains exactly two cycles there are some non trivial trees attached at $v_i$ for some $i=1,2,\ldots,k$. Let $T_i$ be the tree attached at $v_i$ for $i=1,2,\ldots,k.$ Note that for $i=2,\ldots, k-1$, $T_i$  is the component  containing $v_i$ in $G''\setminus \{e_{i-1},e_{1+1}\}$, $T_1$ is the component containing $v_1$  in  $G''\setminus \{ e_1, e_0, e_0'\}$   and $T_k$ is the component containing $v_k$ in $G''\setminus \{e_{k-1},e_{k},e_{k}'\}$. 

Suppose some $T_i$, $i=1,\ldots,k$ are neither trivial trees nor paths with $v_i$ as a pendant vertex. Then form $\tilde{G}$ from $G''$ by sequentially applying grafting of edge operations on those trees such that all $T_i$, $i=1,\ldots,k$ become paths with $v_i$ as a pendant vertex. If all $T_i$, $i=1,\ldots,k$ are either trivial or  paths with $v_i$ as a pendant vertex then take $\tilde{G}$ as $G''$. By Lemma \ref{Grafting}, $\varepsilon(\tilde{G})\geq \varepsilon(G'')$.

Let $v_i$ be the vertex nearest to $v_1$ in $\tilde{T}$ such that $deg(v_i)\geq 3$ (strict inequality occurs only when $v_i=v_1$ or $v_k$, in these cases $d(v_i)=4$) and suppose $w_i$ is the pendant vertex of the path $P_i$ attached at $v_i$. If $v_i=v_1$, delete the edge  $\{v_1,v_2\}$ and add the edge $\{w_1,v_2\}$ and if $v_i\neq v_1$, then delete the edge $\{v_i,v_{i-1}\}$ and add the edge $\{v_{i-1},w_i\}$.  Repeat this till $\deg(v_1)=\deg(v_k)=3$ and $\deg(v_i)=2$ for $i=2,\ldots,k-2.$ . This way we get the graph $C_{n_1,n_2}^n$ from  $\tilde{G}$ and by Lemma \ref{Path-movement}, $\varepsilon(C_{n_1,n_2})>\varepsilon(\tilde{G})$. Now the result follows from Lemma \ref{g-3change}.
\end{itemize} 
\end{proof}

\noindent For $3\leq n\leq 6$, it can be easily checked that $C_n$ uniquely maximizes the total eccentricity index over $ \mathfrak{H}_{n,0}$.
 
\section{Graphs with fixed number of cut vertices}

We denote the set of all connected graphs on $n$ vertices and $s$ cut vertices by $\mathfrak{C_{n,s}}$. Clearly $0\leq s\leq n-2.$ Let $\mathfrak{C_{n,s}^t}$ be the set of all trees on $n$ vertices and $s$ cut vertices. In a tree every vertex is either a cut vertex or a pendant vertex, so  $\mathfrak{C_{n,s}^t}=\mathfrak{T}_{n,n-s}$. The next result follows from Proposition \ref{k-maxtree}.
 
\begin{theorem}
Let $T\in \mathfrak{C_{n,s}^t}.$ Then
$$\varepsilon(T)\leq \varepsilon(T(l,n-l-s,s)),$$ for any $1\leq l \leq n-s-1,$ and
$$\varepsilon(T)\geq \begin{cases}
\varepsilon(T_{n,n-s}) &\mbox{if}\;\; (n-s)\nmid n-2,\\
\varepsilon(T_{n,n-s}^t) &\mbox{if}\;\; (n-s)\mid n-2.
\end{cases}$$
for any $1\leq t \leq n-s-1.$
\end{theorem}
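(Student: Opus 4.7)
The plan is to reduce the theorem immediately to Proposition \ref{k-maxtree} by exploiting the identification $\mathfrak{C_{n,s}^t} = \mathfrak{T}_{n,n-s}$ that was already recorded in the paragraph preceding the statement. The underlying observation is elementary: in any tree, the non-pendant vertices are precisely the cut vertices, since deleting a pendant vertex leaves the remaining tree connected, whereas deleting any vertex of degree at least two disconnects the tree. Consequently, a tree on $n$ vertices with exactly $s$ cut vertices has exactly $n-s$ pendant vertices, and the two families coincide.

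Given this identification, I would simply invoke Proposition \ref{k-maxtree} with the substitution $k = n-s$. The upper bound from that proposition, $\varepsilon(T) \leq \varepsilon(T(l, k-l, n-k))$ valid for $1 \leq l \leq k-1$, becomes $\varepsilon(T) \leq \varepsilon(T(l, n-s-l, s))$ valid for $1 \leq l \leq n-s-1$, which is exactly the upper bound claimed. The piecewise lower bound transcribes in the same way: the divisibility condition $k \mid n-2$ becomes $(n-s) \mid n-2$, the extremal trees $T_{n,k}$ and $T_{n,k}^t$ become $T_{n,n-s}$ and $T_{n,n-s}^t$, and the parameter range $1 \leq t \leq k-1$ becomes $1 \leq t \leq n-s-1$.

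Because this is a pure substitution into a previously quoted result, there is no genuine obstacle. The only small bookkeeping point is to confirm that the parameter ranges for $l$ in the upper bound and $t$ in the lower bound stated in the theorem match $1 \leq l, t \leq k-1$ under $k = n-s$, which they do verbatim. No new estimates, graph transformations, or case analyses are required beyond what Proposition \ref{k-maxtree} already supplies.
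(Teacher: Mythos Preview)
Your proposal is correct and matches the paper's approach exactly: the paper simply notes that $\mathfrak{C_{n,s}^t}=\mathfrak{T}_{n,n-s}$ and states that the result follows from Proposition~\ref{k-maxtree}, which is precisely the substitution $k=n-s$ you describe.
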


For $2\leq m\leq n,$ let $v_1,v_2,\ldots,v_m$ be the vertices of a complete graph $K_m$. For $i=1,2,\ldots,m$ consider the paths $P_{l_i}$, $l_i\geq 1$ such that $l_1+l_2+\cdots+l_m=n$. By identifying a pendant vertex of the path $P_{l_i}$  with the vertex $v_i$  (if $l_i =1$, then identify the single vertex with $v_i$), for $i=1,2,\ldots,m$, we obtain a graph on $n$ vertices with $n-m$ cut vertices. We denote this graph by $K_m^n(l_1,l_2,\ldots, l_m)$. 
\begin{lemma}\label{Path-balance}
Let $m\geq 2$ and  $l_k=\max\{l_1,l_2,\ldots,l_m\}$. Suppose  $l_j\leq l_k-2,$ for some $j\in \{1,\ldots,l_{k-1},l_{k+1},\ldots,l_m\}$. Then $\varepsilon(K_m^n(l_1,l_2,\ldots,l_j+1,\ldots,l_k-1,\ldots,l_m))\leq \varepsilon(K_m^n(l_1,l_2,\ldots, l_m)).$
\end{lemma}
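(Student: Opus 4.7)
The plan is to compute $\varepsilon(G)-\varepsilon(G')$ directly using an explicit eccentricity formula, where I write $G=K_m^n(l_1,\ldots,l_m)$ and $G'=K_m^n(l_1,\ldots,l_j+1,\ldots,l_k-1,\ldots,l_m)$. Label the vertex at distance $p$ from $v_i$ on the $i$-th attached path by $u_i^{(p)}$, so $v_i=u_i^{(0)}$ and $u_i^{(l_i-1)}$ is the pendant. Since $v_1,\ldots,v_m$ form a $K_m$, distances between vertices on different paths are routed via a single $K_m$-edge, giving
\[
e\bigl(u_i^{(p)}\bigr)=\max\bigl(p+L_i^{*},\; l_i-1-p\bigr),\qquad L_i^{*}:=\max_{q\neq i}l_q.
\]
The first term is the distance to the pendant of the longest \emph{other} path, and the second is the distance to the pendant of path $i$ itself. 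Set $L=l_k$ (the maximum) and $L'=\max_{q\neq k}l_q$ (the second maximum), and note that $L'\geq l_j$.

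I would split the proof according to the gap $L-L'$. In the easy regime $L'\geq L-1$, every eccentricity in both graphs simplifies to the outside term $p+L_i^{*}$, since the outside term dominates the internal one $l_i-1-p$ everywhere. Thus only the changes in the $L_i^{*}$'s between $G$ and $G'$ contribute. A short count yields $\varepsilon(G)-\varepsilon(G')=L-1-l_j$ or $2L-1-l_j$ in the two sub-cases of $L'=L$ (the maximum attained at $\geq 3$ indices, respectively at exactly two), and $\varepsilon(G)-\varepsilon(G')=n-1-l_j$ in the sub-case $L'=L-1$; each is strictly positive by the hypothesis $l_j\leq L-2$.

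The main obstacle is the regime $L'\leq L-2$, where path $k$ is long enough that for small $p$ the internal distance $l_k-1-p$ beats the external $p+L'$, and the per-vertex differences $e_G(u_k^{(p)})-e_{G'}(u_k^{(p)})$ on path $k$ are a mix of $+1$, $0$, and $-1$ across the threshold $\beta=\lceil (L-1-L')/2\rceil$. A careful summation on path $k$ contributes $\lfloor (L-L')/2\rfloor$ to the difference when $l_j<L'$, and $-L'$ when $l_j=L'$. Combined with the contributions $+1$ per vertex on all other paths (whose $L_i^{*}$ drops from $L$ to $L-1$ in $G'$) and the term $L'-l_j$ from the ``moved'' pendant vertex that leaves path $k$ and enters path $j$, the total collapses to
\[
\varepsilon(G)-\varepsilon(G')=
\begin{cases}
(n-L)+(L'-l_j)+\lfloor(L-L')/2\rfloor & \text{if }l_j<L',\\
n-L-L' & \text{if }l_j=L',
\end{cases}
\]
the former being $\geq 1$ and the latter being $\geq 0$, with equality in the second case exactly when $m=2$ (in which case $G\cong G'\cong P_n$, consistent with the non-strict inequality). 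The delicate step is precisely this bookkeeping of the mixed-sign per-vertex contributions on path $k$ in the last regime; everything else reduces to counting.
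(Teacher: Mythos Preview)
Your plan is correct and your claimed formulas check out: the eccentricity identity $e(u_i^{(p)})=\max(p+L_i^*,\,l_i-1-p)$ is valid, the ``easy'' regime $L'\geq L-1$ does reduce to the outside term everywhere (in both $G$ and $G'$), and in the regime $L'\leq L-2$ your per-vertex differences on path~$k$ and the resulting totals $(n-L)+(L'-l_j)+\lfloor(L-L')/2\rfloor$ (if $l_j<L'$) and $n-L-L'$ (if $l_j=L'$) are exactly right, with equality precisely when $m=2$.

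The paper, however, argues differently. Instead of computing $\varepsilon(G)-\varepsilon(G')$, it identifies the vertex sets of $G$ and $G'$ (moving the pendant $w_1$ of path $k$ to the end of path $j$) and shows the \emph{pointwise} inequality $e_{G'}(v)\leq e_G(v)$ for every vertex $v$, splitting according to whether $v$ lies on paths $j,k$ or elsewhere, and according to whether $l_j$ equals the second maximum $l_q$. In the one sub-case $l_q=l_j$ where pointwise monotonicity can fail on $S_1$, the paper observes that the induced subgraph on $S_1$ is a path of the same length in both $G$ and $G'$, so the \emph{sum} over $S_1$ is preserved. Your approach is more computational and yields the exact value of the gap (hence a clean characterization of equality), at the cost of the threshold bookkeeping you flag; the paper's approach avoids that arithmetic by working vertex-by-vertex, at the cost of a slightly ad~hoc handling of the $l_q=l_j$ sub-case. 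Both are valid; yours is closer in spirit to a generating-function or telescoping computation, the paper's to a monotonicity/coupling argument.
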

\begin{proof}
Let $G\cong K_m^n(l_1,l_2,\ldots, l_m)$. Let $P_{l_j}:u_1u_2\cdots u_{l_j}$ and $P_{l_k}:w_1w_2\cdots w_{l_k}$  be the paths in $G$ such that $\deg(u_1)=\deg(w_1)=1$,$u_{l_j}=v_j$ and $w_{l_k}=v_k$. Delete the edge $\{w_1,w_2\}$ and add the edge $\{u_1,w_1\}$ in $G$ to form a new graph $G'$. Clearly $G'\cong K_m^n(l_1,l_2,\ldots,l_j+1,\ldots,l_k-1,\ldots,l_m)$. 

If $m=2$ then $G\cong P_{l_k+l_j}\cong G'$ and so $\varepsilon(G')= \varepsilon(G).$ Therefore, assume $m\geq 3.$
Let $S_1=\{w_1,w_2,\ldots, w_{l_k}=v_k, u_1,\ldots,u_{l_j}=v_j\}$ and $S_2=V(G)\setminus S_1= V(G')\setminus S_1.$ So, $S_1\cap S_2=\emptyset$ and $S_1\cup S_2=V(G)=V(G')$. We show that, while moving to $G'$ from $G$, the eccentricity of each vertex either decreases or remains the same.

Suppose $v\in V(G)=V(G')$. Following are the two cases.
 
\noindent {\bf Case-I:} $v\in S_2$

Since $l_k=\max\{l_1,l_2,\ldots,l_m\}$, for any $v\in S_2$,  $w_1$ is an eccentric vertex of $v$ in $G$. So, $e_G(v)=d_G(v,w_1)$ and $e_{G'}(v)$ is either $d_{G'}(v,w_2)=d_{G}(v,w_2)$ or $d_{G'}(v,w_2)+1$. Hence $e_G(v)\geq e_{G'}(v)$ for  $v\in S_2.$

\noindent {\bf Case-II:} $v\in S_1$

Let $l_q=\max\{l_1,\ldots,l_{k-1},l_{k+1},\ldots,l_m\}$ and let $z$ be the pendant vertex of $G$ associated with $P_{l_q}$. Since $l_k\geq l_j+2$ so $l_q\geq l_j.$ First consider  $l_q>l_j$. Then $e_G(v)=d_G(v,w_1)$ or $d_G(v,z)$.

\underline{Subcase-I:} $e_G(v)=d_G(v,w_1)$
Then $v\neq w_1$ and  $d_G(v,z)\leq d_G(v,w_1)$. Since $l_q>l_j$, so $e_{G'}(v)=d_{G'}(v,w_2)$ or $d_{G'}(v,z)$.  But $d_{G'}(v,w_2)=d_{G}(v,w_2)<d_{G}(v,w_1)=e_G(v)$ and $d_{G'}(v,z)=d_{G}(v,z)\leq d_G(v,w_1)=e_G(v)$. So, $e_{G'}(v)\leq e_G(v)$.
 
\underline{Subcase-II:} $e_G(v)=d_G(v,z).$ 

If $v=w_1$, then $d_{G'}(w_1,w_2)=d_G(w_1,u_1)<d_G(w_1,z).$ We have $e_{G'}(w_1)=d_{G'}(w_1,w_2)<d_G(w_1,z)=e_G(w_1)$ or $e_{G'}(w_1)=d_{G'}(w_1,z)=l_j+l_q<l_k-1+l_q=d_G(w_1,z)=e_G(w_1)$.

If $v\neq w_1$ then a similar argument as in Subcase-I will give $e_{G'}(v)\leq e_G(v)$.

Now consider $l_q=l_j.$ Then  either $w_1$ or $u_1$ is an eccentric vertex of $v$ in $G$. Also in $G'$ either $w_2$ or $w_1$ is an eccentric vertex of $v$. Let $A_1$ be the $w_1-u_1$ path in $G$ and let $A_2$ be the $w_2-w_1$ path in $G'$. Then $|V(A_1)|=|V(A_2)|=l_k+l_j.$ So $\sum_{v\in S_1} e_G(v)=\varepsilon(P_{l_k+l_j})=\sum_{v\in S_1} e_{G'}(v)$. 

Hence  from Case-I and Case-II, $\varepsilon(G')\leq \varepsilon(G)$ and this completes the proof. 
\end{proof}
\begin{corollary}\label{Path-balance1}
Let $G\cong K_{n-s}^n(l_1,l_2,\ldots,l_{n-s})$ for some $l_1,l_2,\ldots,l_{n-s}$. Then\\ 
$\varepsilon(K_{n-s}^n(l_1',l_2',\ldots,l'_{n-s}))\leq \varepsilon(G)$, where $|l'_i-l'_j|\leq 1$ for every  $i,j\in \{1,2,\ldots,n-s\}$. 
\end{corollary}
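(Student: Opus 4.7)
\medskip

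\noindent\textbf{Proof proposal.} The plan is to deduce the corollary by iterating Lemma \ref{Path-balance} until no further balancing move is possible, using a monovariant to guarantee termination. Concretely, I will work with the quantity
\[
\Phi(l_1,\ldots,l_{n-s}) \;=\; \sum_{i=1}^{n-s} l_i^{\,2},
\]
and show that each application of Lemma \ref{Path-balance} strictly decreases $\Phi$. Indeed, a single balancing step replaces the pair $(l_j,l_k)$ (with $l_k=\max_i l_i$ and $l_j\le l_k-2$) by $(l_j+1,l_k-1)$, and the change in $\Phi$ is
\[
(l_j+1)^2+(l_k-1)^2-l_j^{\,2}-l_k^{\,2} \;=\; 2(l_j-l_k+1) \;\le\; -2.
\]
Since $\Phi$ is a positive integer, only finitely many such steps can be performed.

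Next, I would verify that the process terminates exactly at a balanced configuration. As long as there exist indices $i,j$ with $|l_i-l_j|\ge 2$, one has $\max_i l_i - \min_i l_i \ge 2$, so choosing $k$ to attain the maximum and any $j$ with $l_j\le l_k-2$ gives an index pair to which Lemma \ref{Path-balance} applies (the hypotheses $l_k\ge 3$ and $l_j\ge 1$ are automatic from the sum constraint $l_1+\cdots+l_{n-s}=n$ and from $l_i\ge 1$). Conversely, at termination we must have $|l_i-l_j|\le 1$ for all $i,j$, which is precisely the balanced condition in the statement.

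Putting the pieces together: starting from $(l_1,\ldots,l_{n-s})$, iterating Lemma \ref{Path-balance} produces a finite sequence of graphs
\[
G=G_0,\,G_1,\,\ldots,\,G_N=K_{n-s}^n(l'_1,\ldots,l'_{n-s}),
\]
each of the form $K_{n-s}^n(\cdot)$, such that $\varepsilon(G_{t+1})\le \varepsilon(G_t)$ for every $t$ by Lemma \ref{Path-balance}, and the terminal multiset $\{l'_1,\ldots,l'_{n-s}\}$ is balanced in the sense that $|l'_i-l'_j|\le 1$ for all $i,j$. Telescoping gives $\varepsilon(K_{n-s}^n(l'_1,\ldots,l'_{n-s}))\le \varepsilon(G)$, as required.

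The only subtlety worth flagging is uniqueness of the balanced configuration: writing $n=q(n-s)+r$ with $0\le r<n-s$, the condition $|l'_i-l'_j|\le 1$ together with $\sum_i l'_i=n$ forces exactly $r$ of the $l'_i$ to equal $q+1$ and the remaining $n-s-r$ to equal $q$. Because $K_{n-s}^n(l_1,\ldots,l_{n-s})$ depends only on the multiset $\{l_1,\ldots,l_{n-s}\}$ (the vertices of $K_{n-s}$ play symmetric roles), the target graph is well-defined up to isomorphism and independent of the particular sequence of moves taken. I do not anticipate any real obstacle beyond bookkeeping; the key input, Lemma \ref{Path-balance}, already handles the single-step comparison.
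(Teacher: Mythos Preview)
Your proposal is correct and is essentially the approach the paper intends: the paper states the corollary immediately after Lemma~\ref{Path-balance} with no separate proof, treating it as a direct consequence of iterating that lemma until the configuration is balanced. Your monovariant $\Phi=\sum_i l_i^2$ is a clean way to make termination explicit, and the uniqueness remark is a welcome clarification, but no genuinely different idea is involved.
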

  
We now count $\varepsilon(K_{n-s}^n(l_1,l_2,\ldots,l_{n-s}))$ where $|l_i-l_j|\leq 1$ for every $i,j\in \{1,2,\ldots,n-s\}$. Let $q=\lfloor\frac{n}{n-s}\rfloor$ and take $r=n-(n-s)q$. Then $0 \leq r< n-s$. Observe that $K_{n-s}$ is a subgraph of $K_{n-s}^n(l_1,l_2,\ldots,l_{n-s})$ and $K_{n-s}^n(l_1,l_2,\ldots,l_{n-s})\setminus E(K_{n-s})\cong rP_{q+1}\cup (n-s-r)P_q$. We may consider $P_q$ and $P_{q+1}$ as $P_q:u_1u_2\cdots u_q$ and $P_{q+1}:w_1w_2\cdots w_{q+1}$, respectively, where $u_1$ and $w_1$ are pendant vertices in $K_{n-s}^n(l_1,l_2,\ldots,l_{n-s})$.\\

\noindent If $r=0$, then $q=\frac{n}{n-s}$ and $K_{n-s}^n(l_1,l_2,\ldots,l_{n-s})\setminus E(K_{n-s})\cong (n-s)P_q$. So, 
\begin{align*}
 \varepsilon(K_{n-s}^n(l_1,l_2,\ldots,l_{n-s}))&=(n-s) \sum_{i=1}^{q}{e_{K_{n-s}^n(l_1,l_2,\ldots,l_{n-s})}(u_i)}\\
 &=(n-s)\sum_{i=1}^q{(2q-i)}\\
 &=q(3q-1)\left(\frac{n-s}{2}\right)\\
 &=\left(\frac{n}{n-s}\right)\left(\frac{3n}{n-s}-1\right)\left(\frac{n-s}{2}\right)\\
 &=\frac{n(2n+s)}{2(n-s)}.
\end{align*}

\noindent If $r=1$ then $K_{n-s}^n(l_1,l_2,\ldots,l_{n-s})\setminus E(K_{n-s})\cong P_{q+1}\cup (n-s-1)P_q$. So

\begin{align*}
\varepsilon(K_{n-s}^n(l_1,l_2,\ldots,l_{n-s}))&=\sum_{j=1}^{q+1}{e_{K_{n-s}^n(l_1,l_2,\ldots,l_{n-s})}(w_j)}+(n-s-1) \sum_{i=1}^{q}{e_{K_{n-s}^n(l_1,l_2,\ldots,l_{n-s})}(u_i)}\\
 &=\sum_{j=1}^{q+1}{(2q+1-j)}+(n-s-1)\sum_{i=1}^q{(2q+1-i)}\\
 &=(n-s)\sum_{i=1}^q{(2q+1-i)}+q\\
 &=\frac{q}{2}\left[3(n-s)q+(n-s+2)\right].
\end{align*}
  
\noindent If $r\geq 2$ then $K_{n-s}^n(l_1,l_2,\ldots,l_{n-s})\setminus E(K_{n-s})\cong rP_{q+1}\cup (n-s-r)P_q$.  So

\begin{align*}
 \varepsilon(K_{n-s}^n(l_1,l_2,\ldots,l_{n-s}))&=r\sum_{j=1}^{q+1}{e_{K_{n-s}^n(l_1,l_2,\ldots,l_{n-s})}(w_j)}+(n-s-r) \sum_{i=1}^{q}{e_{K_{n-s}^n(l_1,l_2,\ldots,l_{n-s})}(u_i)}\\
 &=r\sum_{j=1}^{q+1}{(2q+2-j)}+(n-s-r)\sum_{i=1}^q{(2q+1-i)}\\
 &=(n-s)\sum_{i=1}^q{(2q+1-i)}+2rq+r\\
 &=\frac{1}{2}\left[2r(2q+1)+q(3q+1)(n-s)\right].
\end{align*}
This leads to the following Lemma.
\begin{lemma}
 Let $0\leq s\leq n-2$. Then $$\varepsilon(K_{n-s}^n(l_1,l_2,\ldots,l_{n-s}))=
 \begin{cases}
 \frac{n(2n+s)}{2(n-s)} & \mbox{if $r=0$},\\
 \frac{q}{2}\left(3(n-s)q+(n-s+2)\right) &\mbox{if $r=1$},\\
 \frac{1}{2}\left[2r(2q+1)+q(3q+1)(n-s)\right] &\mbox{if $r\geq 2$},
 \end{cases}$$
 
 where $q=\lfloor\frac{n}{n-s}\rfloor$ and $r=n-(n-s)q.$
\end{lemma}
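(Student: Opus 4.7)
The plan is to compute $\varepsilon(K_{n-s}^n(l_1,\ldots,l_{n-s}))$ by first identifying an eccentric vertex for each individual vertex and then summing the resulting eccentricities. First I would use the balance condition $|l_i-l_j|\le 1$ to observe that the multiset $\{l_1,\ldots,l_{n-s}\}$ consists of exactly $r$ copies of $q+1$ and $n-s-r$ copies of $q$, where $q=\lfloor n/(n-s)\rfloor$ and $r=n-(n-s)q$, so that $K_{n-s}^n(l_1,\ldots,l_{n-s})\setminus E(K_{n-s})\cong rP_{q+1}\cup(n-s-r)P_q$. Since any two attached paths are joined by a single edge of the clique $K_{n-s}$, the distance between a vertex on one path and a vertex on another decomposes additively as (distance to its own attachment point) $+\,1\,+$ (distance from the other attachment point to the other vertex).

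The key structural step is to locate an eccentric vertex of a given $v$. Because the attached paths are the only ``long'' parts of the graph, the eccentric vertex of $v$ is always a pendant vertex of some attached path of maximal length other than $v$'s own (or, when $v$ itself sits on a longest path, the pendant of any other equally long path serves). Using the additive distance formula, for a vertex at position $i$ from the pendant end of a path of length $\ell$, the distance to the pendant of another attached path of length $L$ is $\ell+L-i$, so the eccentricity has an explicit closed form once the relevant $L$ is identified.

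I would then split into three cases by the value of $r$. For $r=0$ every attached path has length $q$, so $e(u_i)=2q-i$, and summing $\sum_{i=1}^q(2q-i)$ over the $n-s$ copies and substituting $q=n/(n-s)$ gives the stated $n(2n+s)/(2(n-s))$. For $r=1$ the unique long path of length $q+1$ is the target for every short-path vertex, while vertices on the long path have their eccentric vertex on any short path, so the relevant eccentricities are $2q+1-i$ on short paths and $2q+1-j$ on the long path. For $r\geq 2$ a long-path vertex sees the pendant of a distinct long path, producing an eccentricity $2q+2-j$, while a short-path vertex still sees a long-path pendant at distance $2q+1-i$; the two contributions assemble into the stated expression after collecting terms. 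The main obstacle I anticipate is the case boundary between $r=1$ and $r\geq 2$, since the eccentric vertex for a long-path vertex has different distance in the two cases and must be identified carefully; once that is settled, the remaining arithmetic is just summing arithmetic progressions $\sum(2q+c-i)$ and matching the closed forms in the lemma statement.
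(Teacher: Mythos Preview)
Your proposal is correct and follows essentially the same approach as the paper: the paper also uses the balance condition to write $K_{n-s}^n(l_1,\ldots,l_{n-s})\setminus E(K_{n-s})\cong rP_{q+1}\cup(n-s-r)P_q$, identifies the eccentric vertex of each path-vertex as the pendant of a longest other path, obtains the same eccentricity formulas $2q-i$, $2q+1-i$, $2q+1-j$, $2q+2-j$ in the three cases $r=0,1,\geq 2$, and then sums the arithmetic progressions.
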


We will now prove the main result of this section regarding minimization . 
\begin{theorem}\label{mincut-thm}
 Let $0\leq s\leq n-2$ and $G\in \mathfrak{C}_{n,s}$. Then $\varepsilon(K_{n-s}^n(l_1',l_2',\ldots,l'_{n-s}))\leq \varepsilon(G)$, where $|l'_i-l'_j|\leq 1$ for every $i,j\in \{1,2,\ldots,n-s\}$.
\end{theorem}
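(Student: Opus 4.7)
I would attack the theorem by reducing an arbitrary $G \in \mathfrak{C}_{n,s}$ to the balanced graph $K_{n-s}^n(l_1', \ldots, l_{n-s}')$ through a sequence of transformations that preserve $s$ and do not increase the total eccentricity index.

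First, using Lemma \ref{edge}, I would add edges within each block of $G$ to make every block a complete graph, producing a graph $G_1$ with $\varepsilon(G_1) \leq \varepsilon(G)$ and the same cut vertex set (so still $s$ cut vertices). This is legitimate because adding edges inside a single block does not modify the block-cut decomposition: no new cut vertex is created, nor is any existing one destroyed.

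The main technical step is to show $\varepsilon(G_1) \geq \varepsilon(K_{n-s}^n(l_1, \ldots, l_{n-s}))$ for some sequence $(l_1, \ldots, l_{n-s})$ with $l_i \geq 1$ and $\sum l_i = n$. Since any graph of the form $K_m^n(l_1, \ldots, l_m)$ has exactly $n-m$ cut vertices, the relevant value is $m = n - s$. To push $G_1$ toward this form, I would argue that if $G_1$ has two or more clique blocks of size at least $3$, we can consolidate the non-cut vertices into a single ``central'' clique of size $n-s$ while converting the other large blocks into chains of $K_2$'s attached to this central clique, maintaining $s$ cut vertices throughout. If the tree of $K_2$ blocks hanging off the central clique has branching outside the central clique, grafting-style reductions in the spirit of Lemma \ref{Grafting} and Lemma \ref{Path-movement} can straighten each branching subtree into a single path attached at a distinct vertex of the central clique, again without increasing $\varepsilon$ or altering the cut vertex count. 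After these local moves, $G_1$ has been reshaped into $K_{n-s}^n(l_1, \ldots, l_{n-s})$ with $\varepsilon$ that has not increased along the way.

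Finally, Corollary \ref{Path-balance1} yields $\varepsilon(K_{n-s}^n(l_1, \ldots, l_{n-s})) \geq \varepsilon(K_{n-s}^n(l_1', \ldots, l_{n-s}'))$ for the balanced sequence $(l_1', \ldots, l_{n-s}')$ satisfying $|l_i' - l_j'| \leq 1$. Chaining the inequalities gives the desired bound $\varepsilon(G) \geq \varepsilon(K_{n-s}^n(l_1', \ldots, l_{n-s}'))$. The main obstacle is the second step: designing a sequence of local graph moves that preserve $s$ exactly while simultaneously not increasing $\varepsilon$, so as to reshape the clique-blocks graph $G_1$ into a star of paths anchored at a central $K_{n-s}$. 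The balancing step follows cleanly from Corollary \ref{Path-balance1}, and the initial edge-addition step is immediate from Lemma \ref{edge}.
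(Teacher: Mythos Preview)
Your overall strategy matches the paper's---complete the blocks, reshape into some $K_{n-s}^n(l_1,\ldots,l_{n-s})$, then balance via Corollary \ref{Path-balance1}---and the first and last steps are fine. The gap is in the middle step. You invoke Lemmas \ref{Grafting} and \ref{Path-movement} to ``straighten each branching subtree into a single path \ldots\ without increasing $\varepsilon$,'' but both lemmas point the other way: Lemma \ref{Grafting} asserts $\varepsilon(G_{k-1,l+1}) > \varepsilon(G_{k,l})$ and Lemma \ref{Path-movement} asserts $\varepsilon(G') > \varepsilon(G)$. They are tools for producing \emph{upper} bounds on $\varepsilon$, not lower bounds, so applying them here would push $\varepsilon$ in the wrong direction for a minimisation argument. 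Your sketch therefore has no mechanism that actually carries $G_1$ to a graph of the form $K_{n-s}^n(\cdot)$ without increasing the total eccentricity.

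The paper avoids this by never needing a straightening step. After completing the blocks it adds edges at every cut vertex lying in three or more blocks, so that each cut vertex lies in exactly two blocks (still only edge additions, hence $\varepsilon$ does not increase and $s$ is preserved). Then, for each non-central block $B_1$ with $|V(B_1)|\geq 3$ sharing a cut vertex $b$ with the adjacent block $B_2$ nearer the centre, it performs a specific move: delete the edges from one chosen vertex $v_2\in V(B_1)$ to $V(B_1)\setminus\{b,v_2\}$ and add all edges from $V(B_1)\setminus\{b,v_2\}$ into $B_2$. It proves this does not increase any eccentricity by tracking a fixed longest path $P$ through the central block and showing that every vertex retains an eccentric vertex on $P$ at no greater distance. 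Iterating collapses every non-central block to $K_2$; since each cut vertex already lies in exactly two blocks, the result is automatically $K_{n-s}^n(l_1,\ldots,l_{n-s})$ with no residual branching to fix. This longest-path argument for the block-merging move is the idea your sketch is missing.
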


\begin{proof} 
Let $G\in  \mathfrak{C}_{n,s}$ and let $H$ be the graph $K_{n-s}^n(l'_1,l'_2,\ldots,l'_{n-s})$ where $|l'_i-l'_j|\leq 1$ for every $i,j\in \{1,2,\ldots,n-s\}$.  If $G\cong K_{n-s}^n(l_1,l_2,\ldots,l_{n-s})$ for some $l_1,l_2,\ldots,l_{n-s}$ then by Corollary \ref{Path-balance1}, $\varepsilon(H)\leq \varepsilon(G)$.

Suppose $G$ is not isomorphic to $K_{n-s}^n(l_1,l_2,\ldots,l_{n-s})$ for any $l_1,l_2,\ldots,l_{n-s}$. If some blocks, say $B_1,B_2,\ldots,B_l$ of $G$ are not complete, then form a new graph $G_1$ from $G$ by joining the non-adjacent vertices of each $B_i,\;1\leq i\leq l$ with edges such that each block of $G_1$ becomes complete, otherwise take $G_1$ as $G$. Observe that $G_1\in \mathfrak{C}_{n,s}$ and by Lemma \ref{edge}, $\varepsilon(G_1)\leq \varepsilon (G).$ If $s=0,$ then $G\cong K_n\cong K_n^n(1,1,\ldots,1)$ and the result follows.

Suppose $s\geq 1$. Then every cut vertex of $G_1$ is shared by at least two blocks. If $w$ is a cut vertex of $G_1$ and $C_1,C_2,\ldots, C_{k}$ with $k\geq 3$ are the blocks sharing the vertex $w$, then  join every pair of non adjacent vertices of $\bigcup_{i=2}^k{V(C_i)}$ by an edge. Repeat this for each cut vertex of $G_1$ which is shared by more than two blocks. This way we get a new graph $G_2$. If every cut vertex of $G_1$ is shared by exactly two blocks then take $G_2$ as $G_1.$ Clearly  $G_2\in \mathfrak{C}_{n,s}$ and by Lemma \ref{edge}, $\varepsilon(G_2)\leq \varepsilon (G_1)$. Note that $G_2$ is a graph in which, every block is complete and every cut vertex is shared by exactly two blocks. If $G_2\cong K_{n-s}^n(l_1,l_2,\ldots,l_{n-s})$, for some $l_1,l_2,\ldots,l_{n-s}$ then by Corollary \ref{Path-balance1}, $\varepsilon(H)\leq \varepsilon(G_2)$ and the result follows. 

Suppose $G_2$ is not isomorphic to $K_{n-s}^n(l_1,l_2,\ldots,l_{n-s})$ for any $l_1,l_2,\ldots,l_{n-s}$. We further consider two cases depending on whether $s=1$ or $s\geq 2$ .

First suppose $s=1$. Then $G_2$ has exactly two complete blocks with a common cut vertex $w$. Let $B_1$ and $B_2$ be the  two blocks of $G_2$ with   $V(B_1)=\{u_1,u_2,\ldots,u_{m_1}=w\}$ and $V(B_2)=\{v_1,v_2,\ldots,v_{m_2}=w\}$ with $m_1,m_2 \geq 3.$ Construct a new graph $G_2'$ from $G_2$ as follow: Delete the edges $\{u_1,u_i\} ,i=2,3,\ldots,m_1-1$ and add the edges $\{u_i,v_j\},i=2,3,\ldots,m_1-1;j=1,2,\ldots,m_2-1.$ Then $V(G_2')=V(G_2)$ and $G_2'$ is isomorphic to $ K_{n-1}^n(2,1,\ldots,1)$ . Note that $e_{G_2'}(w)=e_{G_2}(w)=1$ and for $x\in V(G_2')\setminus \{ w \}=V(G_2)\setminus \{ w \}$, $e_{G_2'}(x)=e_{G_2}(x)=2$. So $\varepsilon(G_2')=\varepsilon(G_2)$ and the result follows.

Now suppose $s\geq 2.$ Then $G_2$ has $s+1$ blocks and  $B_{G_2}$, the block graph of $G_2$ is a tree. So $G_2$ has either one central block or two adjacent central blocks and at least one non central block. Let $C$ be a central block in $G_2$. Suppose  $P$ is a longest path in $G_2.$ Then $P$ passes through exactly two vertices of $C$. Suppose $B_1$ is a non central block in $G_2$  which is not isomorphic to $K_2$. Then $|V(B_1)|\geq 3$. Let  $b\in V(B_1)$ be a cut vertex of $G_2$ such that $d(B_1, C)=d(b,c)$, where $c\in V(C)$ is a cut vertex of $G_2$. Let $B_2$ be the block adjacent to $B_1$ sharing the cut vertex $b$ with $B_1$ ($B_2$ may be same as $C$). Let $V(B_1)=\{b=v_1,v_2,\ldots v_{m_1}\}$ and $V(B_2)=\{b=u_1,u_2,\ldots, u_{m_2}\}$. If $P$ passes through $B_1,$ then $P$ also passes through $B_2$. So, $P$ must contain $b$ and some other vertex of $B_1$, say $v_2$. Construct a new graph $G_2'$  from $G_2$ by deleting the edges $\{v_2,v_i\}\; i=3,4,\ldots m_1$ and adding the edges $\{v_i,u_j\}\; i=3,4,\ldots,m_1, j=2,3,\ldots,m_2$. If $P$ does not pass through $B_1$, we can chose any vertex of $B_1$ in place of $v_2$. Clearly $G_2'\in  \mathfrak{C}_{n,s}$ and the number of blocks in $G_2'$ is same as the number of blocks in $G_2.$ Note that $P$ is still a longest path in $G_2'$. So, the block corresponding to $C$ in  $G_2'$ is still a central block in $G_2'$. We will now show that $\varepsilon(G_2')\leq \varepsilon(G_2)$.

Let $v\in V(G_2)=V(G_2')$ and let $v'$ be an eccentric vertex of $v$ in $G_2$ lies on the longest path $P$. Since $P$ is  a longest path in both $G_2$ and $G_2'$, so $v'$ is also an eccentric vertex of $v$ in $G_2'$. Thus we have $d_{G_2'}(v,v')\leq d_{G_2}(v,v')$. So,  $e_{G_2'}(v)= d_{G_2'}(v,v')\leq d_{G_2}(v,v')=e_{G_2}(v)$ and hence $\varepsilon(G_2')\leq \varepsilon(G_2)$.

If $G_2'$ has some non central block which is not isomorphic to $K_2$, then repeat the same process until we get a graph in which all the non central blocks are $K_2$. Name the new graph as $\tilde{G}$. Note that (as we have shown while moving to $G_2'$ from $G_2$) in each intermediate step between $G_2$ and $\tilde{G},$ the  block corresponding to $C$ is the central block in each step and the total eccentricity index decreases or remains the same.  If all the non central blocks of $G_2$ are isomorphic to  $K_2$ then take $G_2$ as $\tilde{G}$. So $\varepsilon(\tilde{G})\leq \varepsilon(G_2).$

If $\tilde{G}$ has exactly one central block then it is isomorphic to $K_{n-s}^n(l_1,l_2,\ldots,l_{n-s})$ for some positive integers $l_1,l_2,\ldots,l_{n-s}$ and the result follows from Corollary \ref{Path-balance1}. 

Suppose $\tilde{G}$ has two adjacent central blocks $C$ and $C'$ sharing the cut vertex $z$. Let $V(C)=\{x_1,x_2,\ldots,x_s=z\}$ and $V(C')=\{y_1,y_2,\ldots,y_t=z\}$. Since $P$ is a longest path of $\tilde{G}$, it must contain a vertex , say $x_1$ of $C$ different from $z$. Construct a new graph $\bar{G}$ from $\tilde{G}$ by deleting the edges $\{x_1,x_i\},\; 2\leq i\leq s-1$ and adding the edges $\{x_i,y_j\}\; 2\leq i \leq s-1; 1\leq j\leq t-1$. For $v\in V(\tilde{G})=V(\bar{G})$, the shortest path between $v$ and its eccentric vertex( both for $\tilde{G}$ and $\bar{G}$) passes through $z$, so $e_{\tilde{G}}(v)=e_{\bar{G}}(v).$ Hence $\varepsilon(\bar{G})= \varepsilon ({\tilde{G_2}})$. Note that $\bar{G}$ is isomorphic to $ K_{n-s}^n(l_1,l_2,\ldots,l_{n-s})$ for some $l_1,l_2,\ldots,l_{n-s}$. So the result follows from Corollary \ref{Path-balance1}.
\end{proof}

We will now study about the graphs which maximize the total eccentricity index over $\mathfrak{C}_{n,s}$. We have the following theorem for $s=0.$
\begin{theorem}
Let $n\geq 3$ and $G\in \mathfrak{C}_{n,0}$. Then $\varepsilon(G)\leq n\lfloor\frac{n}{2}\rfloor$ and equality happens if $G\cong C_n.$
\end{theorem}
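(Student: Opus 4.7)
The plan is to apply Lemma \ref{Block-transform} directly. Since $G\in \mathfrak{C}_{n,0}$ has no cut vertex and $n\geq 3$, the entire graph $G$ is itself a single block $B$ with $|V(B)|=n\geq 3$, and the hypothesis ``at most one vertex of $B$ is a cut vertex in $G$'' is satisfied vacuously. Therefore Lemma \ref{Block-transform} applies with $B=G$, producing the graph $G'\cong C_n$ (no cut vertices need to be preserved, so the whole block is simply replaced by a cycle on $n$ vertices), and we obtain $\varepsilon(G)\leq \varepsilon(G')=\varepsilon(C_n)$.

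Next I would evaluate $\varepsilon(C_n)$ explicitly. Every vertex of $C_n$ has eccentricity $\lfloor n/2\rfloor$, so
\[
\varepsilon(C_n)=n\left\lfloor \tfrac{n}{2}\right\rfloor,
\]
which already appears implicitly in the case analysis of Lemma \ref{Compare}. This gives the claimed upper bound.

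Finally, since $C_n$ is $2$-connected for $n\geq 3$, the cycle $C_n$ belongs to $\mathfrak{C}_{n,0}$ and attains equality, as required. There is no genuine obstacle here: the only thing to check is that the hypotheses of Lemma \ref{Block-transform} are met (which they are, trivially, because $G$ has no cut vertex at all), and that the eccentricity of a vertex in $C_n$ is $\lfloor n/2\rfloor$. Note that uniqueness of the extremal graph is not claimed, which is consistent with the fact that, for example, $K_4-e$ also achieves $\varepsilon=8=4\lfloor 4/2\rfloor$ when $n=4$.
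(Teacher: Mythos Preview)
Your proof is correct and follows exactly the same route as the paper: both apply Lemma~\ref{Block-transform} to the single block $G$ to obtain $\varepsilon(G)\leq \varepsilon(C_n)$, with your version simply filling in the value $\varepsilon(C_n)=n\lfloor n/2\rfloor$ and the observation that $C_n\in\mathfrak{C}_{n,0}$ explicitly.
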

\begin{proof}
Since $s=0$, so $G$ has exactly one block and the result follows from Lemma \ref{Block-transform}.
\end{proof}
We will now find a graph which maximizes the total eccentricity index over $\mathfrak{C}_{n,1}$. To obtain that graph, the following lemma is very useful.
\begin{lemma}\label{C-reduce}
Let $H$ be a graph with at least two vertices and $w\in V(H)$. Let $G$ be the graph obtained from $H$, $C_{m_1}$ and $C_{m_2}$ by identifying $w$, a vertex of $C_{m_1}$ and a vertex of $C_{m_2}$. Let  $G'$ be the graph obtained from $H$ and $C_{m_1+m_2-1}$ by identifying $w$ with a vertex of $C_{m_1+m_2-1}$. Then $\varepsilon(G')\geq \varepsilon(G)$.
\end{lemma}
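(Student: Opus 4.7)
My plan is to compare $\varepsilon(G)$ and $\varepsilon(G')$ via a natural bijection, handling the vertices of $V(H)$ separately from the cycle vertices. Without loss of generality assume $m_1 \leq m_2$ (the set-up is symmetric in the two cycles). Set $\rho_i = \lfloor m_i/2 \rfloor$, $R = \lfloor (m_1+m_2-1)/2 \rfloor$, and $D = e_H(w)$. A short parity check shows $R \geq \rho_2 = \max(\rho_1,\rho_2)$ whenever $m_1 \geq 3$. Label the non-$w$ vertices of $C_{m_1}$ as $a_1,\ldots,a_{m_1-1}$, of $C_{m_2}$ as $b_1,\ldots,b_{m_2-1}$, and of $C_{m_1+m_2-1}$ as $c_1,\ldots,c_{m_1+m_2-2}$, with $d_i := d_G(w,a_i) = \min(i,m_1-i)$, $d'_j := d_G(w,b_j) = \min(j,m_2-j)$ and $d''_k := d_{G'}(w,c_k) = \min(k, m_1+m_2-1-k)$. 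Let $\phi: V(G)\to V(G')$ be the identity on $V(H)$ with $\phi(a_i)=c_i$ and $\phi(b_j)=c_{m_1-1+j}$; this ``unfolds'' the two small cycles consecutively along the big cycle.

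For $v \in V(H)$, every eccentric vertex lies in $V(H)$ or on one of the attached cycles, so
\[
  e_G(v) = \max\bigl(e_H(v),\, d_H(v,w) + \rho_2\bigr) \leq \max\bigl(e_H(v),\, d_H(v,w) + R\bigr) = e_{G'}(v),
\]
giving $\sum_{v\in V(H)} e_{G'}(v) \geq \sum_{v\in V(H)} e_G(v)$.

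For the cycle vertices, a direct case analysis on which direction realizes the farthest distance yields
\[
  e_G(a_i) = d_i + \max(\rho_2, D), \quad e_G(b_j) = \max\bigl(\rho_2,\, d'_j + \max(\rho_1,D)\bigr), \quad e_{G'}(c_k) = \max(R,\, d''_k + D).
\]
To establish $\sum_k e_{G'}(c_k) \geq \sum_i e_G(a_i) + \sum_j e_G(b_j)$, I would split on $D$: (i)~$D \geq \rho_2$, (ii)~$\rho_1 \leq D < \rho_2$, (iii)~$D < \rho_1$. In case (i) every $\max$ resolves to the $D$-term; after cancelling $D(m_1+m_2-2)$ on both sides the inequality reduces to $\sum_k d''_k \geq \sum_i d_i + \sum_j d'_j$, i.e.
\[
  R\lceil (m_1+m_2-1)/2\rceil \;\geq\; \rho_1 \lceil m_1/2 \rceil + \rho_2 \lceil m_2/2 \rceil,
\]
and a parity case check confirms this (the difference is one of $2\rho_1\rho_2$, $\rho_2(2\rho_1-1)$, $\rho_1(2\rho_2-1)$, or $2\rho_1\rho_2 - \rho_1 - \rho_2$, all non-negative for $m_1,m_2\ge 3$). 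In cases (ii) and (iii), both $e_G(b_j)$ and $e_{G'}(c_k)$ become piecewise, equal to their ``radius'' value for small $d'_j$ respectively $d''_k$, and equal to the ``distance-plus-$D$'' value otherwise; I would count explicitly how many indices lie in each regime, after which the inequality again reduces to the same parity identity together with elementary bookkeeping.

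The main obstacle is the bookkeeping in cases (ii) and (iii), where the thresholds at which the $\max$s switch argument depend simultaneously on $D$, $\rho_1$, $\rho_2$, and the parities of $m_1$ and $m_2$. The intuitive reason the inequality holds is that the single long cycle $C_{m_1+m_2-1}$ places more vertices at large distance from $w$ (so the total distance from $w$ summed over cycle vertices is larger in $G'$), and this ``distance surplus'' outweighs the fact that antipodal vertex pairs lying on different small cycles in $G$ were at distance $\rho_1+\rho_2$, slightly larger than the diameter $R$ of the merged cycle. Combining the $V(H)$ estimate with the cycle estimate yields $\varepsilon(G') \geq \varepsilon(G)$.
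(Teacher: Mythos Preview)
Your setup, bijection, and eccentricity formulas are all correct, and your case~(i) is complete. The approach is in fact very close to the paper's: the paper uses the same vertex labelling, the same bijection between the two small cycles and the big one, and the same basic dichotomy ``$D$ large vs.\ $D$ small''. The substantive difference is organizational, and it is precisely what creates your ``main obstacle''.

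You separate $V(H)$ from the cycle vertices and try to prove $\sum_k e_{G'}(c_k)\ge \sum_i e_G(a_i)+\sum_j e_G(b_j)$ on its own. This cycle-sum inequality appears to be true, but establishing it directly in your cases~(ii)--(iii) really does require the parity-by-parity bookkeeping you left undone, and it is a \emph{stronger} statement than what is needed. The paper avoids this entirely with a compensation argument. Note that for $m_1\ge 3$ one actually has $R\ge \rho_2+1$ (not just $R\ge\rho_2$), so in the regime $D\le \rho_2$ your $V(H)$ inequality is strict: $e_{G'}(v)\ge e_G(v)+1$ for every $v\in V(H)$, giving a surplus of at least $|V(H)|\ge 2$ on the $H$-side. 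On the cycle side the paper shows, vertex by vertex under the bijection, that $e_{G'}(\phi(x))\ge e_G(x)$ for every cycle vertex except possibly the two antipodes $a_{\rho_1}$, $b_{\rho_2}$ when both $m_1,m_2$ are even, and for those two one only has $e_{G'}\ge e_G-1$. Thus the cycle contribution drops by at most $2$, which is exactly covered by the surplus from $H$. This sidesteps all of the threshold counting you were facing. Your proof would go through if you kept your case~(i) as is and, for $D<\rho_2$, replaced the attempted separate cycle-sum bound by this compensation between $H$ and the (at most two) bad antipodal cycle vertices.
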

\begin{proof}
Consider the following labelling  of vertices of $C_{m_1}$, $C_{m_2}$ and $C_{m_1+m_2-1}$:
$$C_{m_1}:wu_1 \cdots u_{\lfloor \frac{m_1}{2} \rfloor} u_{\lfloor \frac{m_1}{2} \rfloor+1} \cdots u_{m_1-1}w,$$ 
$$C_{m_2}:w v_1 \cdots v_{\lfloor \frac{m_2}{2} \rfloor} v_{\lfloor \frac{m_2}{2} \rfloor+1} \cdots v_{m_2-1} w $$ 
and $$C_{m_1+m_2-1}:w u_1 \cdots  u_{m_1-1} v_{m_2-1} v_{m_2-2} \cdots  v_1 w.$$ 

\noindent Let $v\in V(G)=V(G')$ and $v'$ be an eccentric vertex of $v$ in $G$. Let $h\in V(H)$ such that $d(h,w)=\max\{d(x,w):x \in V(H)\}$. Without loss of generality, assume that $m_1\geq m_2$.  We now consider the following cases.\\

\noindent{\bf Case I:} $d(w,h)>\lfloor \frac{m_1}{2} \rfloor.$ \\
Suppose $v\in V(G)\setminus V(H)=\{u_1,u_2,\ldots,u_{m_1-1},v_1,v_2,\ldots,v_{m_2-1}\}.$ Then we can take $v'$ as $h$ and so $d_{G'}(v,h)=d_{G'}(v,w)+d_{G'}(w,h)\geq d_G(v,w)+d_G(w,h)=d_G(v,h)=e_G(v)$. This implies $e_{G'}(v)\geq e_G(v)$. \\
Now suppose $v\in V(H)$. Then either $v'\in V(H)$ or $v'=u_{\lfloor \frac{m_1}{2} \rfloor}$. If  $v'\in V(H)$, then $d_{G'}(v,v')=d_{G}(v,v')=e_G(v).$ This implies $e_{G'}(v)\geq e_G(v).$ If $v'=u_{\lfloor \frac{m_1}{2} \rfloor},$ then $d_{G'}(v,v')=d_{G'}(v,u_{\lfloor \frac{m_1}{2} \rfloor})=d_{G'}(v,w)+d_{G'}(w,u_{\lfloor \frac{m_1}{2} \rfloor})=d_G(v,w)+d_G(w,u_{\lfloor \frac{m_1}{2} \rfloor})=d_G(v,v')=e_G(v)$. This implies $e_{G'}(v)\geq e_G(v).$ Thus $\varepsilon(G')\geq \varepsilon(G).$ 

\noindent{\bf Case II:} $d(w,h)\leq \lfloor \frac{m_1}{2} \rfloor.$ \\
Suppose $v\in V(H)$. For any $z\in V(H)$, $d_G(v,z)\leq d_G(v,w)+d_G(w,z)\leq d_G(v,w)+\lfloor \frac{m_1}{2} \rfloor=d_G(v,u_{\lfloor \frac{m_1}{2} \rfloor})$. Therefore, we can take $v'$ as $u_{\lfloor \frac{m_1}{2} \rfloor}$ and so the eccentric vertices of $v$ in $G'$ are in $C_{m_1+m_2-1}$. Therefore, $e_{G'}(v)=d_{G'}(v,w)+\lfloor \frac{m_1+m_2-1}{2} \rfloor  >d_G(v,w)+\lfloor \frac{m_1}{2} \rfloor=e_G(v).$ This implies $e_{G'}(v)\geq e_G(v)+1$ for all $v\in V(H).$

 Now suppose $v\in V(G)\setminus V(H)$.  If $v'\in V(H)$, we can choose $v'=h$ and in this case  $d_{G'}(v,v')=d_{G'}(v,w)+d_{G'}(w,h)\geq d_G(v,w)+d_G(w,v')=d_G(v,v')=e_G(v)$. This implies $e_{G'}(v)\geq e_G(v).$  If  $v' \in V(G)\setminus V(H)$ then we have two subcases.

\noindent{\bf Subcase I:} At least one of  $m_1$ or $m_2$ is odd.\\
In this case, $e_G(v)\leq \lfloor \frac{m_1}{2} \rfloor+\lfloor \frac{m_2}{2} \rfloor \leq \lfloor \frac{m_1+m_2-1}{2} \rfloor \leq e_{G'}(v).$\\

\noindent{\bf Subcase II:} Both $m_1$ and $m_2$ are even.\\ 
Suppose  $v\in \{u_1,u_2,\ldots,u_{m_1-1},v_1,v_2,\ldots,v_{m_2-1}\}\setminus \{u_{\frac{m_1}{2}},v_{\frac{m_2}{2}}\}$.  Then $d_G(v,v')\leq \frac{m_1}{2}+\frac{m_2}{2}-1$ and $e_{G'}(v)\geq \lfloor\frac{m_1+m_2-1}{2}\rfloor=\frac{m_1+m_2-2}{2}=\frac{m_1}{2}+\frac{m_2}{2}-1\geq d_G(v,v')=e_G(v).$

For $v\in \{u_{\frac{m_1}{2}},v_{\frac{m_2}{2}}\}$,  $e_G(v)=  \frac{m_1+m_2}{2}$. We have 
$$e_{G'}(u_{\frac{m_1}{2}})\geq\left\lfloor\frac{m_1+m_2-1}{2}\right\rfloor= \frac{m_1+m_2}{2}-1=e_G(u_{\frac{m_1}{2}})-1$$
and  
$$e_{G'}(v_{\frac{m_2}{2}})\geq\left\lfloor\frac{m_1+m_2-1}{2}\right\rfloor=\frac{m_1+m_2}{2}-1=e_G(v_{\frac{m_2}{2}})-1.$$
As $|V(H)|\geq 2,$ there exists $w'\in V(H)$ different from $w$ such that 
\begin{align*}
e_{G'}(u_{\frac{m_1}{2}})+e_{G'}(v_{\frac{m_2}{2}})+e_{G'}(w)+e_{G'}(w')& \geq e_{G}(u_{\frac{m_1}{2}})-1+e_{G}(v_{\frac{m_2}{2}})-1+e_{G}(w)+1+e_{G}(w')+1\\&=e_{G}(u_{\frac{m_1}{2}})+e_{G}(v_{\frac{m_2}{2}})+e_{G}(w)+e_{G}(w').
\end{align*}

Therefore, $\varepsilon(G')\geq \varepsilon(G)$ and  this completes the proof.
\end{proof}

Now we count the total eccentricity index of the graph $C_{m_1,m_2}^n$, where $m_1+m_2-1=n.$ There are four cases depending upon whether $m_1$ and $m_2$ are even or odd. Let us consider the case when $m_1$ and $m_2$ are both even. Let us label the vertices of $C_{m_1}$ and $C_{m_2}$ in $C_{m_1,m_2}^n$ as $C_{m_1}: wu_1\cdots u_{\frac{m_1}{2}-1} u_{\frac{m_1}{2}} u_{\frac{m_1}{2}+1} \cdots u_{m_1-1}w$ and $C_{m_2}:wv_1v_2\cdots v_{\frac{m_2}{2}-1} v_{\frac{m_2}{2}} v_{\frac{m_2}{2}+1} \cdots v_{m_2-1}w$. Without loss of generality, assume that $m_1\geq m_2.$ Take $m_1=m_2+k$, so $k\geq 0$ is even. Then 
 \begin{itemize}
 
 \item for $i=1,2,\ldots \frac{m_2}{2}$, $e(v_i)=e(v_{m_2-i})=i+\frac{m_1}{2}$;  
 \item $e(w)=\frac{m_1}{2}$;
 \item for $j=1,2,\ldots \frac{k}{2}$, $e(u_j)=e(m_1-j)=\frac{m_1}{2}$;  
 \item for $\frac{k}{2}+1\leq j\leq \frac{m_1}{2}$, $e(u_{j})=e(u_{m_1-j})=j+\frac{m_2}{2}.$ 
 
\end{itemize} 
So,
 \begin{align*}
 \varepsilon(C_{m_1,m_2}^n)&= e(w) + \sum_{i=1}^{m_2-1}{e(v_i)}+ \sum_{j=1}^{m_1-1}{e(u_j)}\\
 &= e(w)+2\sum_{i=1}^{\frac{m_2}{2}-1}{e(v_i)}+e(v_{\frac{m_2}{2}}) + 2\sum_{j=1}^{\frac{k}{2}}{e(u_j)}+2\sum_{j=\frac{k}{2}+1}^{\frac{m_1}{2}-1}{e(u_{j})}+ e(u_{\frac{m_1}{2}})\\
 &=\frac{1}{2}(m_1^2+m_2^2+m_1m_2-m_1).
 \end{align*}
For the other three cases the total eccentricity index of $C_{m_1,m_2}^n$ with $m_1+m_2-1=n$ can be counted similarly. Based on these calculation, we have 
$$\varepsilon(C_{m_1,m_2}^n) = \begin{cases}
 \frac{1}{2}(m_1^2+m_2^2+m_1m_2-m_1) &\mbox{if both $m_1$ and $m_2$ are even},\\
 \frac{1}{2}(m_1^2+m_2^2+m_1m_2-m_1-m_2) &\mbox{if $m_1$ is even and $m_2$ is odd},\\
 \frac{1}{2}(m_1^2+m_2^2+m_1m_2-2m_1+1) &\mbox{if $m_1$ is odd and $m_2$ is even},\\
  \frac{1}{2}(m_1^2+m_2^2+m_1m_2-2m_1-m_2) &\mbox{if both $m_1$ and $m_2$ are odd}.
 \end{cases}$$
 
 Next we count the total eccentricity index of $U_{n,g}^p$ where $3\leq g \leq n-1.$ Suppose $g$ is even. Let $w_1,w_2,\ldots,w_{n-g}$ be the pendant vertices of $U_{n,g}^p$ and $C_g: v_0v_1\cdots v_{g-1}v_0$ such that $deg(v_0)=n-g+2.$ Then we have
\begin{itemize}
\item for $j=1,\cdots,n-g$, $e(w_j)=1+\frac{g}{2}$
\item for $i \in \{0,1,\ldots,g-1\}\setminus \{\frac{g}{2}\}$, $e(v_i)=\frac{g}{2}$
\item $e(v_{\frac{g}{2}})=\frac{g}{2}+1$.
\end{itemize} 
 
 So, $\varepsilon(U_{n, g}^p)=(n-g)(1+\frac{g}{2})+(g-1)(\frac{g}{2})+(\frac{g}{2}+1)=  \frac{ng}{2}+n-g+1$. Similarly $\varepsilon(U_{n, g}^p)$ can be counted when $g$ is odd and we get 
 $$\varepsilon(U_{n, g}^p)=\begin{cases}
 \frac{ng}{2}+n-g+1    &\mbox{if $g$ is even},\\
\frac{n(g-1)}{2}+n-g+2 &\mbox{if $g$ is odd}.
\end{cases}$$   
 
In particular for $g=n-1$, $U_{n,n-1}^p\cong U_{n,n-1}^l$  and 
\begin{equation}\label{eq1}
\varepsilon(U_{n,n-1}^l)=\begin{cases}
\frac{n(n-2)}{2}+3  &\mbox{if $n$ is even},\\
\frac{n(n-1)}{2}+2  &\mbox{if $n$ is odd}.
\end{cases}
\end{equation}
Based on these calculations we have the following lemma.
\begin{lemma}\label{Cut1-compare}
Let $m_1\geq m_2\geq 3$ and $n\geq 5.$ 
\begin{itemize}
\item[$(i)$] If $m_1+m_2-1=n$ then $\varepsilon(C_{m_1,m_2}^n)\leq\varepsilon(U_{n,n-1}^l)$. Furthermore the equality happens if and only if $m_1$ is even and $m_2=3.$
\item[$(ii)$] For $3\leq g\leq n-2$, $\varepsilon(U_{n,g}^p)\leq \varepsilon(U_{n,g+1}^p)$ and equality holds if and only if $g$ is even.
\end{itemize}
\end{lemma}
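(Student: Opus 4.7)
The plan is to derive both inequalities by direct substitution into the closed-form expressions for $\varepsilon(C_{m_1,m_2}^n)$, $\varepsilon(U_{n,g}^p)$, and $\varepsilon(U_{n,n-1}^l)$ that were just tabulated in the text (the four-case formula preceding the lemma, and equation (\ref{eq1})). No geometric perturbation is needed; everything reduces to elementary algebra once the parities are split correctly.

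For part $(i)$, since $n=m_1+m_2-1$, the parity of $n$ is forced by the parities of $m_1$ and $m_2$, so I would split into the same four parity cases used to present $\varepsilon(C_{m_1,m_2}^n)$. In each case, I substitute $n=m_1+m_2-1$ into the corresponding branch of (\ref{eq1}) and compute $2(\varepsilon(U_{n,n-1}^l)-\varepsilon(C_{m_1,m_2}^n))$; after expanding $(m_1+m_2-1)(m_1+m_2-2)$ or $(m_1+m_2-1)(m_1+m_2-3)$, most terms cancel and the difference simplifies to
\[
(m_1-3)(m_2-2),\quad (m_1-3)(m_2-3),\quad (m_1-4)(m_2-2),\quad m_1m_2-m_1-2m_2+6
\]
in the cases (both even), ($m_1$ even, $m_2$ odd), ($m_1$ odd, $m_2$ even), and (both odd), respectively. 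Using $m_1\geq m_2\geq 3$ together with the parity restrictions (e.g.\ $m_2$ even forces $m_2\geq 4$; $m_1$ odd with $m_1\geq m_2\geq 4$ forces $m_1\geq 5$), each of the first three expressions is seen to be nonnegative, and strictly positive unless $m_1$ is even and $m_2=3$, which occurs only in the second case. For the fourth (all-odd) case, I rewrite $m_1m_2-m_1-2m_2+6=m_1(m_2-1)-2(m_2-3)$; since $m_2\geq 3$ makes $m_2-3\geq 0$ and $m_1(m_2-1)\geq m_2(m_2-1)$, this is bounded below by $m_2^2-3m_2+6$, a quadratic in $m_2$ with negative discriminant, hence strictly positive. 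This confirms the claimed inequality and equality case.

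For part $(ii)$, direct substitution into the two-line formula for $\varepsilon(U_{n,g}^p)$ finishes the matter. If $g$ is even then $g+1$ is odd, and both $\varepsilon(U_{n,g}^p)$ and $\varepsilon(U_{n,g+1}^p)$ evaluate to $\tfrac{ng}{2}+n-g+1$, giving equality. If $g$ is odd then $g+1$ is even, and
\[
\varepsilon(U_{n,g+1}^p)-\varepsilon(U_{n,g}^p)=\left(\tfrac{n(g+1)}{2}+n-g\right)-\left(\tfrac{n(g-1)}{2}+n-g+2\right)=n-2,
\]
which is strictly positive for $n\geq 5$, yielding strict inequality.

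The only genuine obstacle is the bookkeeping in part $(i)$: keeping track of which parity case produces the borderline factor $(m_2-3)$, and separately handling the all-odd case which does not factor as a product of two linear terms. Once those two points are organized, the rest is mechanical.
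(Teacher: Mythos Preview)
Your proposal is correct and follows exactly the approach the paper indicates: the paper simply writes ``Based on these calculations we have the following lemma'' without spelling out the algebra, and your case-by-case verification from the tabulated formulas for $\varepsilon(C_{m_1,m_2}^n)$, $\varepsilon(U_{n,g}^p)$, and equation~(\ref{eq1}) is precisely what is intended. Your four difference expressions and the handling of the parity constraints (in particular isolating $(m_1-3)(m_2-3)$ as the only case admitting equality, and bounding the non-factoring all-odd expression via $m_2^2-3m_2+6$) are all accurate.
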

For $n=3$, the path $P_3$ is the only graph with one cut vertex. For $n=4$, the star $K_{1,3}$ and $U_{4,3}^l$ are the only two graphs with one cut vertex and $\varepsilon(K_{1,3})=\varepsilon(U_{4,3}^l)=7$. So we consider $n\geq 5.$
\begin{theorem}\label{s=1}
Let $n\geq 5$ and $G\in \mathfrak{C}_{n,1}.$ Then $\varepsilon(G) \leq \varepsilon(U_{n,n-1}^l)$.
\end{theorem}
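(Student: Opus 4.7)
The plan is to transform $G$, via a sequence of operations that do not decrease the total eccentricity index, into a graph for which Lemma~\ref{Cut1-compare} applies directly. Let $w$ denote the unique cut vertex of $G$. Since $w$ is the only cut vertex, every block of $G$ is a pendant block containing $w$, and $G$ has at least two blocks. First I would apply Lemma~\ref{Block-transform} to each non-$K_2$ block to replace it with a cycle on the same number of vertices (keeping $w$ as the shared cut vertex). The resulting graph $G_1$ satisfies $\varepsilon(G_1)\geq \varepsilon(G)$, and $G_1$ consists of some cycles $C_{m_1},\ldots,C_{m_k}$ (with $m_i\geq 3$) together with $p$ pendant edges, all meeting at $w$, where $k+p\geq 2$.

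I would then split into cases. If $k=0$, then $G_1\cong K_{1,n-1}$, so $\varepsilon(G_1)=2n-1$, and a direct calculation with the formula \eqref{eq1} gives $2n-1\leq \varepsilon(U_{n,n-1}^l)$ (indeed $\varepsilon(U_{n,n-1}^l)-(2n-1)$ equals $\tfrac{(n-2)(n-4)}{2}$ or $\tfrac{(n-2)(n-3)}{2}$ according to the parity of $n$, both nonnegative for $n\geq 5$). If $k=1$, then $G_1\cong U_{n,m_1}^p$ and iterating Lemma~\ref{Cut1-compare}(ii) yields $\varepsilon(G_1)\leq \varepsilon(U_{n,n-1}^p)=\varepsilon(U_{n,n-1}^l)$. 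If $k\geq 2$, I would repeatedly apply Lemma~\ref{C-reduce} to merge pairs of cycles at $w$: at each step the graph $H$ in the lemma is the remainder of $G_1$, which contains $w$ together with either a pendant vertex or another cycle, so $|V(H)|\geq 2$ as long as there is something besides the two cycles being merged.

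The merging process terminates in one of two configurations. If $p\geq 1$ at the start, we can reduce all the way down to a single cycle and obtain $U_{n,g}^p$ for some $g\geq 3$, at which point Lemma~\ref{Cut1-compare}(ii) finishes the proof. If $p=0$, the merging must stop when exactly two cycles remain, because merging them would require $H=\{w\}$; we then land in the configuration $C_{m_1,m_2}^n$ with $m_1+m_2-1=n$, and Lemma~\ref{Cut1-compare}(i) finishes the proof.

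The main obstacle is the bookkeeping around the applicability of Lemma~\ref{C-reduce}: one must verify at every stage that the residual graph $H$ has at least two vertices, and identify the correct stopping configuration depending on whether pendant edges are present. Once the case split is set up cleanly, the rest is a direct appeal to the preceding lemmas.
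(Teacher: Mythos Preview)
Your proof is correct and follows essentially the same approach as the paper: replace non-$K_2$ blocks by cycles via Lemma~\ref{Block-transform}, then repeatedly merge cycles at $w$ via Lemma~\ref{C-reduce}, terminating either at $U_{n,g}^p$ (handled by Lemma~\ref{Cut1-compare}(ii)) or at $C_{m_1,m_2}^n$ with $m_1+m_2-1=n$ (handled by Lemma~\ref{Cut1-compare}(i)). Your explicit $(k,p)$ parameterization and the careful check that $|V(H)|\geq 2$ at each merge are a slightly cleaner packaging of the same argument.
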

\begin{proof} Suppose $G$ is not isomorphic to $U_{n,n-1}^l$. If $G$ has no cycle then $G\cong K_{1,n-1}$ and $\varepsilon(G)=\varepsilon(K_{1,n-1})=2n-1<\varepsilon(U_{n,n-1}^l)$, by (\ref{eq1}). Suppose $G$ has some cycles. Since $G$ has a unique cut vertex, so all the blocks of $G$ are pendant blocks.  Let $w$ be the  cut vertex in $G$ and let $B_1,B_2,\ldots, B_k$ be the blocks of $G$ with atleast three vertices. Construct a new graph $G'$ from $G$ by replacing each $B_i,\; 1\leq i \leq k$ with a cycle on same number of vertices. Then $G' \in \mathfrak{C}_{n,1}$ and by Lemma \ref{Block-transform}, $\varepsilon(G) \leq \varepsilon(G')$. If $G'$ has exactly one cycle, then $G'$ is isomorphic to $U_{n,g}^p$ for some $g\geq 3$. The result follows from Lemma \ref{Cut1-compare} $(ii)$. 

Suppose $G'$ has at least two cycles. Let $C_{m_1}$ and $C_{m_2}$ be two cycles in $G'$. If $m_1+m_2-1=n,$ then $G'\cong C_{m_1,m_2}^n$. So, by Lemma \ref{Cut1-compare} $(i)$, $\varepsilon(G')\leq \varepsilon(U_{n,n-1}^l)$ and the result follows. If $n>m_1+m_2-1$, then there are at least three blocks sharing the common vertex $w$ in $G'$. Replace the blocks $C_{m_1}$ and $C_{m_2}$  by the cycle $C_{m_1+m_2-1}$ in $G'$ to get a new graph $G''$. Note that $G''\in \mathfrak{C}_{n,1} $ and by Lemma \ref{C-reduce}, $\varepsilon(G')\leq \varepsilon(G'')$. 

If all the blocks of $G''$ are cycles, then repeat this process(if necessary) until we get a graph $\tilde{G}$ on exactly two blocks. By Lemma \ref{C-reduce}, $\varepsilon(G'')\leq \varepsilon(\tilde{G})$ and $\tilde{G}\cong C_{m,m'}^n$ where $m+m'-1=n$. Now the result follows from Lemma \ref{Cut1-compare} $(i)$. 
 
If $G''$ contains $K_2$ as block  then repeat the above process (if necessary) until we get a graph $\bar{G}$ having exactly one cyclic block. Note that $\bar{G}\in \mathfrak{C}_{n,1} $ and $\bar{G}\cong U_{n,g}^p$ for some $g\geq 3.$ By Lemma \ref{Cut1-compare} $(ii)$, $\varepsilon (\bar{G})\leq \varepsilon(U_{n,n-1}^l)$ and this completes the proof.
\end{proof}
 The path $P_n$ is the only graph in $\mathfrak{C}_{n,n-2}.$ We will now obtain a graph which maximizes the total eccentricity index over $\mathfrak{C}_{n,n-3}$.
\begin{theorem}\label{s=n-3}
Let $n\geq 5$ and $G\in \mathfrak{C}_{n,n-3}.$ Then $\varepsilon(G) \leq \varepsilon(U_{n,3}^l).$ 
\end{theorem}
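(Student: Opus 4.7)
The plan is to classify every $G\in\mathfrak{C}_{n,n-3}$ by its block structure into a short list, and then reduce each case to a tree with three pendant vertices, whereupon Proposition~\ref{k-maxtree} applied with $k=3$ yields the desired bound, since $\varepsilon(T(1,2,n-3))=\lfloor(3n^2-4n-3)/4\rfloor=\varepsilon(U_{n,3}^l)$ by Proposition~\ref{Uc-max}.

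The classification step rests on the identity $\sum_B(|V(B)|-c(B))=n-s$, where $c(B)$ counts cut vertices of $G$ inside block $B$; this comes from counting edges of the block-cut tree, and for $G\in\mathfrak{C}_{n,n-3}$ it forces the sum to equal $3$. For a non-$K_2$ block $B$ of size $k\geq 3$, each of its $c(B)$ cut vertices spawns at least one branch of the block-cut tree that must terminate in a pendant block (contributing a non-cut vertex of $G$), so $B$ alone forces at least $(k-c(B))+c(B)=k$ non-cut vertices; hence $k\leq 3$ and the block must be $C_3$. A similar bookkeeping rules out two non-$K_2$ blocks $B_1,B_2$: exactly one branch from each is consumed by the unique $B_1$--$B_2$ path in the block-cut tree, leaving at least $(k_1-c_1)+(k_2-c_2)+(c_1-1)+(c_2-1)=k_1+k_2-2\geq 4$ non-cut vertices, which exceeds $3$. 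Consequently $G$ is one of: (a) a tree in $\mathfrak{T}_{n,3}$; (b) $U_{n,3}^l$; (c) a $C_3$ with two chains of lengths $l_1,l_2\geq 1$, $l_1+l_2=n-3$, attached at two distinct vertices; or (d) a $C_3$ with three chains of lengths $l_1,l_2,l_3\geq 1$, $\sum l_i=n-3$, one at each vertex.

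Case (a) is immediate from Proposition~\ref{k-maxtree}. For cases (c) and (d), I would delete a single edge of the $C_3$---in (c) the edge joining the unique non-cut triangle vertex to one of its two cut-vertex neighbors, and in (d) any edge of the triangle. A direct check shows the resulting graph $G^*$ is a tree lying in $\mathfrak{T}_{n,3}$ (its three leaves being the chain endpoints, together with the freed triangle vertex in case (c)), so by Lemma~\ref{edge} $\varepsilon(G)\leq\varepsilon(G^*)$, and case (a) applied to $G^*$ then finishes the proof. The main obstacle is the structural classification: the branch-counting argument must carefully track how chains of $K_2$ blocks emanate from each cut vertex of a non-$K_2$ block in order to produce the correct lower bound on non-cut vertices and thereby exclude every block configuration other than a single $C_3$.
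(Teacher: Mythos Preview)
Your proof is correct. The overall architecture---split into tree and non-tree cases, with the tree case handled by Proposition~\ref{k-maxtree} and the identification $\varepsilon(T(1,2,n-3))=\varepsilon(U_{n,3}^l)$---matches the paper. The difference lies in the non-tree case. The paper disposes of it in one line: once one asserts that any non-tree $G\in\mathfrak{C}_{n,n-3}$ must be unicyclic of girth~$3$, Proposition~\ref{Uc-max} finishes immediately. You instead \emph{prove} that structural assertion via the block--cut tree identity $\sum_B(|V(B)|-c(B))=n-s$ and the branch-counting argument, obtaining the explicit list (b)--(d), and then reduce each of those to the tree case by deleting one triangle edge and invoking Lemma~\ref{edge}. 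Your route is longer but more self-contained: it does not need the full extremal statement of Proposition~\ref{Uc-max} (only the numerical value $\varepsilon(U_{n,3}^l)=\lfloor(3n^2-4n-3)/4\rfloor$, which could also be computed directly). The paper's route is shorter precisely because it leans on that external unicyclic result; on the other hand, it leaves the claim ``$G$ must be unicyclic with girth~$3$'' unjustified, which your classification supplies.
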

\begin{proof}
Suppose $G$ is not isomorphic to $U_{n,3}^l.$ If $G$  is a tree then it has exactly one vertex of degree $3$. Using grafting of edges operation sequentially  (if necessary), we get  the tree $T(2,1,n-3)$ from $G$ and  by Lemma \ref{Grafting}, $\varepsilon(G) \leq \varepsilon(T(2,1,n-3)).$ Let $v_1$ and $v_2$ be the two pendant vertices of $T(2,1,n-3)$ such that $d(v_1,v_2)=2$. Form a new graph $G'$ from $T(2,1,n-3)$ by joining $v_1$ and $v_2$ with an edge. Then  $G'\cong U_{n,3}^l$. Since $\varepsilon(T(2,1,n-3))=\varepsilon(U_{n,3}^l)$ so the result follows. If $G$ is not a tree then it must be a unicyclic graph with girth $3$ and the result follow from Proposition \ref{Uc-max}.
\end{proof}
We end this section with the following conjecture.
\begin{conjecture}
Let $n\geq 5$ and $2\leq s \leq n-4$. If $G\in \mathfrak{C}_{n,s}$ then $\varepsilon(G) \leq \varepsilon(U_{n,n-s}^l).$ 
\end{conjecture}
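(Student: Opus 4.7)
The plan is to mimic the reduction strategies of Theorem \ref{s=1} and Theorem \ref{s=n-3}: given an arbitrary $G\in\mathfrak{C}_{n,s}$, I would construct a sequence of transformations, each preserving the cut-vertex count and weakly increasing $\varepsilon$, that terminates at $U_{n,n-s}^{l}$.

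First, I would apply Lemma \ref{Block-transform} to replace every non-trivial block of $G$ by a cycle on the same vertex set; this preserves the cut vertices and weakly increases $\varepsilon$. Next, I would use Lemma \ref{C-reduce} repeatedly to merge any two cycle-blocks sharing a cut vertex, which again preserves $s$ and weakly increases $\varepsilon$. After these two reductions, every block of $G$ is either a $K_{2}$ or a cycle, and no two cycle-blocks share a cut vertex.

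The critical step is then to force $G$ to be unicyclic. If $G$ still contains several cycle-blocks, they are joined in the block-cut tree through paths of $K_{2}$-bridges, and a new cycle-absorption lemma is needed: given a pendant cycle-block $C_{g}$ attached at a cut vertex $c$, transform the graph to eliminate $C_{g}$ (or merge it into a unique cycle) while keeping $n$ and $s$ fixed and not decreasing $\varepsilon$. A natural attempt is to open $C_{g}$ into a pendant path of length $g-1$, since by Lemma \ref{edge} this does not decrease $\varepsilon$; however, it increases $s$ by $g-2$, so it must be paired with a compensating closing move on the opposite side of the block-cut tree. Once only one cycle-block remains, $G$ is unicyclic with cycle $C_{g}$ and pendant trees at cycle vertices, and I would apply Lemma \ref{Grafting} restricted to pendant paths of length at least $2$ (in which case grafting preserves $s$), together with a lemma in the spirit of Lemma \ref{Path-movement} that consolidates pendant paths attached at distinct cycle vertices into a single pendant path at one cycle vertex. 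Because $U_{n,g}^{l}$ has exactly $n-g$ cut vertices, preserving $s$ throughout forces $g=n-s$ and hence $G\cong U_{n,n-s}^{l}$.

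The main obstacle is precisely the cycle-absorption step: each of the moves already developed in the paper either preserves $s$ but acts only within a unicyclic or cycle-sharing structure, or else weakly increases $\varepsilon$ but changes $s$ in the wrong direction. Combining these into an $s$-preserving reduction from multicyclic to unicyclic graphs, or else bounding $\varepsilon(G)$ directly in terms of the block sizes and the structure of the block-cut tree, is the key technical difficulty and is presumably the reason the authors state the result only as a conjecture.
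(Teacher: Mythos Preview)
The statement is a \emph{conjecture} in the paper; the authors give no proof, so there is nothing to compare your proposal against. Your closing paragraph correctly identifies the essential obstruction and is, in effect, the same reason the authors leave the statement open.

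That said, the gap opens earlier than you suggest. Lemma~\ref{Block-transform} applies only to a block containing \emph{at most one} cut vertex of $G$; it says nothing about ``through'' blocks carrying two or more cut vertices, so for $s\geq 2$ you cannot in general replace every non-trivial block by a cycle on the same vertex set while preserving~$s$. Similarly, Lemma~\ref{C-reduce} is stated only for the situation in which both cycles are pendant blocks hanging at the common cut vertex~$w$ (everything else is the $H$ of the lemma); if either cycle itself contains a further cut vertex of $G$, the lemma does not apply as written, and a naive merge could alter the cut-vertex count. Thus even reaching your intermediate configuration---``every block is $K_2$ or a cycle, and no two cycle-blocks share a cut vertex''---is not delivered by the paper's existing lemmas. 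Your diagnosis of the multicyclic-to-unicyclic reduction as the crux is sound, but the $s$-preserving control problem already bites in steps one and two.
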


\end{document}